\documentclass[preprint,11pt]{elsarticle_pre}
\usepackage{hyperref}

\usepackage{float}
\usepackage{amssymb}
\usepackage[dvipsnames]{xcolor}

\usepackage[english]{babel} 

\addtolength{\textwidth}{1.5cm}\addtolength{\oddsidemargin}{-.75cm} \addtolength{\evensidemargin}{-.75cm}
\usepackage{amsmath,amssymb,latexsym,amsthm,enumerate,epsfig,graphicx}%, color}%natbib,
\tolerance=1
\emergencystretch=\maxdimen
% \hyphenpenalty=10000
\hbadness=10000

\newtheorem{definition}{Definition}[section]
\newtheorem{theorem}{Theorem}[section]
\newtheorem{proposition}{Proposition}[section]
\newtheorem{lemma}{Lemma}[section]

\newtheorem{remark}{Remark}[section]
\numberwithin{equation}{section}

\newif\ifcomment \commentfalse

\newcommand{\remove}[1]{}

%\newcommand{\Fine}{\mbox{ }\hfill $\Box$\\}

%%%%%%%%%%%%%%%%%%%

%\flushleft
\newenvironment{vardesc]}[1]{%
\settowidth{\parindent}{#1: \ }
\makebox{#1:}}{}

\setlength{\parindent}{0cm}

\begin{document} 

\begin{frontmatter}

\title{Local existence of smooth solutions \\ to multiphase models in two space dimensions}

%\author{R. Bianchini$^{\diamond}$ \& R. Natalini$^{\star}$}
\author[Bianchini]{Roberta Bianchini}

%\thanks{$^{\diamond}$ Dipartimento di Matematica, Universit\`a degli Studi di Roma "Tor Vergata", via della Ricerca Scientifica 1, I-00133 Rome, Italy.}
%\thanks{$^{\star}$ Istituto per le Applicazioni del Calcolo "M. Picone", Consiglio Nazionale delle Ricerche, via dei Taurini 19, I-00181 Rome, Italy.}

\ead{bianchin@mat.uniroma2.it}
%% \ead[url]{home page}
\fntext[Bianchini]{Dipartimento di Matematica, Universit\`a degli Studi di Roma "Tor Vergata", via della Ricerca Scientifica 1, I-00133 Rome, Italy - Istituto per le Applicazioni del Calcolo "M. Picone", Consiglio Nazionale delle Ricerche, via dei Taurini 19, I-00185 Rome, Italy. }
%\cortext[cor1]{}
%\address{Dipartimento di Matematica, Universit\`a degli Studi di Roma "Tor Vergata", via della Ricerca Scientifica 1, I-00133 Rome, Italy.\fnref{label3}}
 %\fntext[label3]{}

\author[Natalini]{Roberto Natalini}
\ead{roberto.natalini@cnr.it}
%\address{Istituto per le Applicazioni del Calcolo "M. Picone", Consiglio Nazionale delle Ricerche, via dei Taurini 19, I-00181 Rome, Italy.\fnref{label3}}
\fntext[Natalini]{Istituto per le Applicazioni del Calcolo "M. Picone", Consiglio Nazionale delle Ricerche, via dei Taurini 19, I-00185 Rome, Italy.}
\begin{abstract}
In this paper, we consider a class of models for multiphase fluids, in the framework of mixture theory. The considered system, in its more general form, contains both the gradient of a hydrostatic pressure, generated by an incompressibility constraint,  and the gradient of a compressible pressure depending on the volume fractions of some of the different phases. To approach these systems, we define an approximation based on the \emph{Leray} projection, which involves the use of the \emph{Lax} symbolic symmetrizer for hyperbolic systems and paradifferential techniques. In two space dimensions, we prove its well-posedness and convergence to the unique classical solution to the original system. In the last part, we shortly discuss the difficulties in the three dimensional case.
\end{abstract}

\begin{keyword} 
Fluid dynamics model \sep mixture theory \sep multiphase fluids \sep compressible pressure \sep incompressible pressure  \sep paradifferential calculus \sep quasi-linear hyperbolic systems \sep biofilms.
\end{keyword} 

\end{frontmatter}

\section{Introduction}
We consider a fluid composed of two different phases, with respective volume fractions $B, L \in [0,1],$ depending on time $t \in [0, T]$, for some $T>0$, and space $x \in \mathbb{R}^{d},$ and satisfying the following system:

\begin{equation}
\label{BL_system1}
\begin{cases}
& \partial_{t}B + \nabla \cdot (B v_{S}) = \Gamma_{B}, \\
& \partial_{t}L + \nabla \cdot (L v_{L}) = \Gamma_{L}, \\
& \partial_{t}(B{v}_{S})+\nabla \cdot (B v_{S} \otimes v_{S}) + \gamma \nabla B + B \nabla P = (M-\Gamma_{L})v_{L}-Mv_{S}, \\
& \partial_{t}(Lv_{L})+\nabla \cdot (L v_{L} \otimes v_{L}) + L \nabla P = -(M-\Gamma_{L})v_{L}+Mv_{S}, \\
& B+L=1, \\
& \Gamma_{B}+\Gamma_{L}=0, \\
\end{cases}
\end{equation}

where the $d$-dimensional vectors $v_{S}, v_{L}$ are the velocities of $B$ and $L$ respectively, $\nabla P$ is the gradient of an incompressible pressure term, $\gamma, M$ are experimental constant values and $\Gamma_{B}, \Gamma_{L}$ are given source terms, possibly dependent on $B$ and $L$. In (\ref{BL_system1}), the momentum equations for the solid and the liquid phases are different: in the first one, $\gamma \nabla B$ is the excess stress tensor, while, following \cite{astanin}, \cite{Farina}, there is no excess stress tensor for the liquid part. Summing the first and the second equation, using the two last conditions in (\ref{BL_system1}), and setting $L=1-B$, yields:
\begin{equation}
\label{BL_System}
\begin{cases}
& \partial_{t}B + \nabla \cdot (B v_{S}) = \Gamma_{B}, \\
& \partial_{t}{v}_{S}+{v}_{S} \cdot \nabla {v}_{S} +\frac{\gamma \nabla B}{B}+ \nabla {P}=\frac{(M+\Gamma_{B})(v_{L}-v_{S})}{B}, \\
& \partial_{t}{v}_{L}+{v}_{L} \cdot \nabla {v}_{L} + \nabla {P}=\frac{M(v_{S}-v_{L})}{(1-B)}, \\
& \nabla \cdot (Bv_{S} + (1-B)v_{L}) = 0, \\
\end{cases}
\end{equation}

where the last equation represents the incompressibility of the mixture as a whole.
More generally, system (\ref{BL_System}) is the two phases case of a wide class of multiphase models of fluids of an arbitrary number $N$ of constituents. These models arise from mixture theory, see for instance \cite{Rajagopal}, \cite{Bowen1}, \cite{Bowen2}, \cite{Muller1}, \cite{Muller2}, \cite{Ehlers}, and their general formulation is as follows:

\begin{equation}
\label{general_mixture_model}
\begin{cases}
& \rho_{n}(\partial_{t}\phi_{n}+\nabla \cdot (\phi_{n} v_{n}))=\Gamma_{n}, \\
& \rho_{n}(\partial_{t}(\phi_{n}v_{n})+\nabla \cdot (\phi_{n} v_{n} \otimes v_{n})) = \nabla \cdot \tilde{T}_{n} + m_{n} + \Gamma_{n} v_{n}, \\
& \sum_{n=1}^{N}\phi_{n}=1, \\
& \sum_{n=1}^{N} \Gamma_{n}=0, \\
\end{cases}
\end{equation}

where 

\begin{itemize}
\item $\rho_{n}$ is the density of the $n^{th}$ phase, assumed to be constant, in accordance with the incompressibility condition,
\item $\phi_{n}$ is the volume fraction of each constituent,
\item $v_{n}$ is the specific velocity,
\item $\Gamma_{n}$ is the mass exchange rate between different phases,
\item $\tilde{T}^{n}$ is the partial stress tensor,
\item $m_{n}$ is the interaction force, which is related to interactions between different phases across the interface.
\end{itemize}

For more details about the derivation of model (\ref{general_mixture_model}) see, for instance, \cite{Farina}. These models arises in many fields, as tumor growth and vasculogenesis in \cite{astanin}, biological tissues and porous media in \cite{Farina}. In particular, among  several applications, we refer to the model proposed in \cite{cdnr}, which describes biological structures called \emph{biofilms}, composed by four different phases: bacteria, dead bacteria, extracellular polymeric matrix, and the liquid phase, with respective volume fractions $B, D, E, L.$ This model satisfies the following equations:

\begin{equation}
\label{BDEL}
\begin{cases}
& \partial_{t}B+\nabla \cdot (Bv_{S})=\Gamma_{B}, \\
& \partial_{t}D+\nabla \cdot (Dv_{S})=\Gamma_{D}, \\
& \partial_{t}E+\nabla \cdot (Ev_{S})=\Gamma_{E}, \\
& \partial_{t}L+\nabla \cdot (Lv_{L})=\Gamma_{L}, \\
& \partial_{t}((1-L)v_{S}) + \nabla \cdot ((1-L)v_{S} \otimes v_{S}) + \gamma \nabla (1-L) + (1-L) \nabla P \\
& =(M-\Gamma_{L})v_{L}-Mv_{S}, \\
& \partial_{t}(Lv_{L}) + \nabla \cdot (Lv_{L} \otimes v_{L}) + L \nabla P = -(M-\Gamma_{L})v_{L}+Mv_{S}, \\
& B+D+E+L=1, \\
& \Gamma_{B}+\Gamma_{D}+\Gamma_{E}+\Gamma_{L}=0, \\
\end{cases}
\end{equation}

where $v_{\phi}, \Gamma_{\phi}$ are respectively the velocities and the source terms for $\phi=B, D, E, L,$ and $\nabla P$ is the incompressible pressure. Actually, system (\ref{BL_System}) is just the two phases case of (\ref{BDEL}), where the three "solid" species are lumped together. Besides, system (\ref{BL_System}) is worth considering, since many models for biological applications are constituted by two phases as, for instance, the one presented in \cite{Farina}. In the following, we will focus on system (\ref{BL_System}) for simplicity reasons, but the arguments will be briefly generalized to the  case of system (\ref{BDEL}) in Section 4.

Although mixture models are largely diffused, up to now the analytical theory has been mainly developed in one space dimension, see for instance \cite{Parabello},  \cite{Stewart}, \cite{Ystrom}, and \cite{Ishii}, while some results about linear stability and numerical approximations were considered in \cite{Gud1}. More recently, a complete analytical study of the one dimensional biofilms model (\ref{BDEL}) and the related two phases system (\ref{BL_System}), with the proof of the global existence and uniqueness of smooth solutions and the analysis of their asymptotic behavior  for initial data, which are small perturbations of the equilibrium point, were given in \cite{Bianchini1}. Actually, in the one dimensional case, a great simplification occurs due to the fact that, by adding the mass balance equations of (\ref{general_mixture_model}) for $n=1, \cdots, N,$ we get the incompressibility condition for the mixture as a whole, which, in the one dimensional case, reads
\begin{equation}
\label{incompressibility_condition}
\sum_{n=1}^{N} \partial_{x}(\phi_{n} v_{n}) = 0.
\end{equation}
After some calculations, the equality (\ref{incompressibility_condition}) allows us to solve for the incompressible pressure $\nabla P$ in (\ref{general_mixture_model}). Besides, the remaining system is symmetrizable hyperbolic (see \cite{Majda}, \cite{Benzoni} \cite{Taylor}), and so the standard theory applies. On the other hand, in several space dimensions there is not a simple way to deal with the term $\nabla P$. More precisely, considering model (\ref{BL_System}), the incompressibility condition is the following:
\begin{equation}
\label{incompressibility_BDEL}
\nabla \cdot (Bv_{S}+(1-B)v_{L})=0.
\end{equation} 
In order to work using a divergence free formulation, we define

\begin{equation}
\label{change1}
w:=Bv_{S}+(1-B)v_{L}
\end{equation}
and we could try to apply classical methods used for incompressible fluids, see \cite{Temam}, \cite{Majda}, \cite{Valli}, and \cite{Bertozzi}, which are essentially based on the projection of the velocity field onto the space of the divergence free vectors. 

However, in our case, even in the divergence free variables, there are some difficulties. The first one is given by the interaction between the \emph{Friedrichs} symmetrizer of the hyperbolic part of system (\ref{BL_System}) and the gradient of the incompressible pressure term. Actually, the scalar product induced by the classical symmetrizer does not preserve the orthogonality of the gradient of the incompressible pressure with respect to the divergence free average velocity. This happens since the symmetrizer and the pressure part of the system do not commute and, moreover, their commutator is still a first order operator, see Section 2 below. Therefore, we cannot get rid of the incompressible pressure, unlike in the case of the incompressible Euler equations, see for instance \cite{Lions}. Furthermore, it is not obvious how to get useful energy estimates in Sobolev spaces for system (\ref{BL_System}), since our hydrostatic pressure does not possess enough regularity in space. In fact, looking at the elliptic equation for the pressure $P$, which is obtained applying the divergence operator to the momentum equations in system (\ref{BL_System}), we have
\begin{equation}
\label{pressure_elliptic_equation1}
\Delta P= -\sum_{j=1}^{d}\sum_{i=1}^{d}  \partial_{x_{j}}w_{i} \partial_{x_{i}}w_{j} - \nabla \cdot \nabla \cdot (B(1-B)z \otimes z) - \gamma \Delta B,
\end{equation}
where $z:=v_{S}-v_{L}.$
Let us compare (\ref{pressure_elliptic_equation1}) with the elliptic equation for the pressure $P^{E}$ of the incompressible Euler equations with velocity $v^{E}$, see \cite{Lions}, namely 
\begin{equation}
\label{pressure_Euler}
\Delta P^{E} =- \sum_{j=1}^{d}\sum_{i=1}^{d}  \partial_{x_{j}}{v^{E}}_{i} \partial_{x_{i}}{v^{E}}_{j}.
\end{equation}
Starting from velocity fields $w,z$ in (\ref{pressure_elliptic_equation1}) and $v^{E}$ in (\ref{pressure_Euler}) with the same $H^{s}$ regularity for some $s>[d/2]+1,$ our pressure $P$ in (\ref{pressure_elliptic_equation1}) is only in  $H^{s}$, while $P^{E}$ in (\ref{pressure_Euler}) is in $H^{s+1}$. So, because of this lack of regularity, which is due not only to the inertial term
$\nabla \cdot (B(1-B) z \otimes z),$ but also to the compressible pressure term $\gamma \nabla B$, we are unable to close the energy estimates for system (\ref{BL_System}).

For all these reasons, the different approaches used for incompressible fluids do not work for (\ref{BL_System}). For instance, even if the numerical simulations in \cite{cdnr}, which use the \emph{Chorin-Temam} projection method  \cite{Temam}, seem to yield some reliable results,   we do not know how to prove any rigorous convergence result for  this approximation scheme in this case. In fact, while the $L^{2}$-norm of the projected solution is estimated step by step by the $L^{2}$-norm of the non-projected vector, thanks to the \emph{Hodge} decomposition theorem \cite{Temam}, this property no more holds for the scalar product induced by the symmetrizer and so we are unable to control the energy estimates. This structural difficulty is also the reason why the singular perturbation approximation in \cite{Bianchini}, which can be viewed as a continuous version of the projection method, does not work for system (\ref{BL_System}). Also, we are not able to prove the convergence of the approximation used by \emph{Valli} \& \emph{ Zajazckowski} in \cite{Valli} to solve the incompressible Euler equations, since, again, we cannot get the necessary energy estimates from the elliptic equation (\ref{pressure_elliptic_equation1}). For completeness, we notice that the same holds for the artificial compressibility method of \emph{Temam} in \cite{Temam}, since there is no classical symmetrizer for the related approximating compressible system and the \emph{Lax} symmetrizer that we have found does not satisfy the assumptions required in studying singular perturbations approximations, as in \cite{Grenier}.

In spite of these negative remarks, in this paper we prove the convergence of one approximation to system (\ref{BL_System}), made by the composition of some smoothing operators and the \emph{Leray} projector, see \cite{Bertozzi} and \cite{Bianchini} for different applications of this technique.
Here, the main idea is as follows. First, we apply the projector onto the space of the vectors such that the averaged velocity $w$ is divergence free. Then, we consider the highest order part of the paradifferential operator associated to the projected system (\ref{BL_System}), which has a useful structural property. It can be verified that the highest order part is a strongly hyperbolic operator of the first order, therefore it is possible to find a \emph{Lax} symmetrizer for it. The construction of this symmetrizer is essentially based on the techniques developed in \cite{Metivier}, which are combined here to some ideas in \cite{Grenier}. We point out that, the main point here is to symmetrize the whole projected operator, rather than just use the symmetrizer of the hyperbolic part of (\ref{BL_System}). Using paradifferential calculus, we are able to establish some uniform energy estimates and the convergence of this method to (\ref{BL_System}), as well as in the case of the more general model (\ref{BDEL}),  both in two space dimensions. 

\subsection{Plan of the paper}
The paper is organized as follow. In Section 2 we discuss the general setting and the main properties of the two phases system (\ref{BL_System}) in two space dimensions, by emphasizing the difficulties for the various  formulations of the problem. Section 3 is devoted to the definition and well-posedness of our approximation, using an approach based on paradifferential calculus, and a proof of its convergence. In Section 4, we show how to apply the previous arguments to the more general system (\ref{BDEL}), always in two space dimensions. Finally, in Section 5 we discuss the difficulties we have found to extend these results to the the three dimensional case.  

\section{General setting}
\subsection{Basic formulation}
Let $\textbf{u}=(B, v_{S}, v_{L})$. System (\ref{BL_System}) can be written in the following compact form:

\begin{equation}
\label{BL_System_Compact1}
\begin{cases}
& \partial_{t}\textbf{u}+\sum_{j=1}^{d}A_{j}(\textbf{u})\partial_{x_{j}}\textbf{u}+F_{P}=G(\textbf{u}), \\
& \nabla \cdot (Bv_{S} + (1-B)v_{L})=0, \\
\end{cases}
\end{equation}

where the term $F_{P}$ is given by the gradient of the hydrostatic incompressible pressure

\begin{equation}
\label{Fp}
F_{P}=(0, \nabla P, \nabla P),
\end{equation}

and the source term have the following expression

\begin{equation}
\label{Source}
G(\textbf{u})=(\Gamma_{B}, \Gamma_{v_{S}}, \Gamma_{v_{L}}),
\end{equation}

where

\begin{equation}
\label{Gamma}
\begin{array}{ccc}
\Gamma_{B}=B(k_{B}(1-B)-k_{D}), & \Gamma_{v_{S}}=\frac{(M+\Gamma_{B})(v_{L}-v_{S})}{B}, & \Gamma_{v_{L}}=\frac{M(v_{S}-v_{L})}{(1-B)},
\end{array}
\end{equation}

and $k_{B}, k_{D}, M$ are experimental constants. The initial data related to (\ref{BL_System_Compact1}) are the following:
\begin{equation}
\label{initial_data}
\textbf{u}(0,x)=\textbf{u}_{0}(x)=(B_{0}(x), v_{{S}_{0}}(x), v_{{L}_{0}}(x)) ~~~ \text{such that} ~~~ \nabla \cdot (B_{0} v_{{S}_{0}} + (1-B_{0}) v_{{L}_{0}})=0.
\end{equation}
 
Although most of the calculations in this first section hold in the general $d$-dimensional case, we limit our consideration only to the two dimensional case. In one space dimension, in fact, system (\ref{BL_System_Compact1}) is a particular version of that already discussed in \cite{Bianchini1}, while in three space dimensions there are some structural problems that lead to technical difficulties, as we will see in Section 5. Setting $d=2$, system (\ref{BL_System_Compact1}) reads

\begin{equation}
\label{BL_System_Compact}
\begin{cases}
& \partial_{t}\textbf{u}+A_{1}(\textbf{u})\partial_{x}\textbf{u}+A_{2}(\textbf{u})\partial_{y}\textbf{u}+F_{P}=G(\textbf{u}), \\
& \nabla \cdot (Bv_{S} + (1-B)v_{L})=0, \\
\end{cases}
\end{equation}

with $F_{p}$ in \eqref{Fp}, $G(\textbf{u})$ in \eqref{Source} and the initial data $\textbf{u}_{0}$ in \eqref{initial_data}. The flux matrices are:
\begin{equation}
\label{A12}
A_{1}(\textbf{u})=\left( \begin{array}{ccccc}
v_{S_{1}} & B & 0 & 0 & 0 \\
\frac{\gamma}{B} & v_{S_{1}} & 0 & 0 & 0 \\
0 & 0 & v_{S_{1}} & 0 & 0 \\
0 & 0 & 0 &  v_{L_{1}}  & 0 \\
0 & 0 & 0 & 0 &  v_{L_{1}}\\
\end{array} \right), ~~~~~ A_{2}(\textbf{u})=\left( \begin{array}{ccccc}
v_{S_{2}} & 0 & B & 0 & 0 \\
0 & v_{S_{2}} & 0 & 0 & 0 \\
\frac{\gamma}{B}  & 0 & v_{S_{2}} & 0 & 0 \\
0 & 0 & 0 &  v_{L_{2}}  & 0 \\
0 & 0 & 0 & 0 &  v_{L_{2}}\\
\end{array} \right).
\end{equation}

\textbf{Assumption.}
\label{assumption}
From (\ref{A12}), the terms $B$ and $(1-B)$ cannot vanish. Then, we take $0 < B < 1.$ 
\begin{remark}
\label{translation}
The assumption above is quite natural, in fact, from the mass balance equation for $B$ in (\ref{BL_System}), if the initial data $B_{0}$ (and $1-B_{0}$) in (\ref{initial_data}) does not vanish for all $x \in \mathbb{R}^{d}$, then, under some standard assumptions of regularity, $B(t,x)$ (and $1-B(t,x)$) cannot vanish too. In particular, if   $B_{0} \in W^{1,\infty}(\mathbb{R}^{2})$ and  $v_{S} \in L^{1}([0,T], Lip(\mathbb{R}^{2}))$, the strict positivity of $B$ (and $1-B$) follows by the results in \cite{Lad}. 

In the following, we prove that, fixing a constant value $\bar{B}$ and taking $B_{0}$ such that $B_{0}-\bar{B} \in H^{s}(\mathbb{R}^{2})$, with $s>[d/2]+1=2$, then $(B-\bar{B}, v_{S}) \in C([0,T], H^{s}(\mathbb{R}^{2})) \cap C^{1}([0,T], H^{s-1}(\mathbb{R}^{2}))$, in accordance with the assumptions of Proposition 1 in \cite{Lad}.
\end{remark}

As discussed in Remark \ref{translation}, system (\ref{Biofilms_Compact_Projector}) is singular in $B=0,$ then the unknown $B$ cannot belong to $L^{2}(\mathbb{R}^{2}).$ In order to work in the natural setting of the Sobolev spaces, we make a slight modification. From the form of the source term ${G}$ in (\ref{Source})--\eqref{Gamma}, the admissible equilibrium point of system (\ref{BL_System}) is the following:
\begin{equation}
\label{equilibrium_point}
\bar{\textbf{u}}=(\bar{B}, \bar{v}_{S}, \bar{v}_{L})=(1-\frac{k_{D}}{k_{B}}, \bar{v}, \bar{v}),
\end{equation}
where $\bar{v}$ is a two dimensional constant vector arbitrarily chosen. Taking $\bar{v}=0,$ we have
\begin{equation}
\label{equilibrium_point1}
\bar{\textbf{u}}=(\bar{B}, \textbf{0}, \textbf{0}). 
\end{equation}
In this section, to simplify the presentation, we define the translated system, which will be considered in Section 3.
Let
\begin{equation}
\label{new_variable}
\tilde{\textbf{u}}=(\tilde{B}, \tilde{v}_{S}, \tilde{v}_{L}):=\textbf{u}-\bar{\textbf{u}},
\end{equation}
with $\bar{\textbf{u}}$ in (\ref{equilibrium_point1}). Then, we will study the following system:
\begin{equation}
\label{BL_compact_translated1}
\begin{cases}
& \partial_{t}\tilde{\textbf{u}}+\sum_{j=1}^{2}{A}_{j}(\tilde{\textbf{u}}+\bar{\textbf{u}})\partial_{x_{j}}\tilde{\textbf{u}}+{F}_{P}={G}(\tilde{\textbf{u}}+\bar{\textbf{u}}), \\
& \nabla \cdot ((\tilde{B}+\bar{B})\tilde{v}_{S}+(1-(\tilde{B}+\bar{B}))\tilde{v}_{L})=0, \\
\end{cases}
\end{equation}
with initial data 
\begin{equation}
\label{translated_initial}
\tilde{\textbf{u}}(0,x)=\tilde{\textbf{u}}_{0}={\textbf{u}}_{0}-\bar{\textbf{u}},
\end{equation}
and ${\textbf{u}}_{0}$ in (\ref{initial_data}).
We provide now the definition of classical local solutions to (\ref{BL_System}).
\begin{definition}
\label{classical_solutions}
Let $s>2$ be fixed. The function $\tilde{\textbf{u}}=(\tilde{B}, \tilde{v}_{S}, \tilde{v}_{L})$ is a classical solution to system (\ref{BL_System_Compact1}), if $\tilde{\textbf{u}} \in C([0,T], H^{s}(\mathbb{R}^{2}))\cap C^{1}([0,T], H^{s-1}(\mathbb{R}^{2}))$ for any time $T>0,$ and $\tilde{\textbf{u}}$ solves  system (\ref{BL_compact_translated1}) in the classical sense, with initial data $\tilde{\textbf{u}}_{0} \in H^{s}(\mathbb{R}^{2})$ in (\ref{translated_initial}, where $P$ is a function such that $\nabla P \in C([0,T], H^{s-1}(\mathbb{R}^{2})).$
\end{definition}

In the remainder of this section, we just omit the tilde to simplify the notations. Considering (\ref{A12}), it is easy to find a diagonal matrix that symmetrizes the first order part $A_{1}(\textbf{u})\partial_{x}\textbf{u}, A_{2}(\textbf{u})\partial_{y}\textbf{u}$ of system (\ref{BL_System_Compact}. The \emph{Friedrichs} (or classical) symmetrizer is
\begin{equation}
\label{A0}
S_{0}(\textbf{u})=diag(\gamma/B, B, B, (1-B), (1-B)).
\end{equation}
The existence of this symmetrizer for $A_{1}(\textbf{u}), A_{2}(\textbf{u})$ implies that, disregarding the pressure term, system (\ref{BL_System_Compact}) is hyperbolic. Nevertheless, that classical symmetrizer is not useful to close some energy estimates, since we have to deal also with the incompressible pressure term $F_{P}$. In fact, in the Sobolev spaces $H^{s}(\mathbb{R}^{2})$ with $s>2$, when we take the $s$-derivative of system (\ref{BL_System_Compact}) and multiply by $\nabla^{s}\textbf{u}$ in order to get energy estimates, the right-hand side of the equation contains the following scalar product in $L^{2}(\mathbb{R}^{2})$:
$$(S_{0}(\textbf{u})\nabla^{s}F_{P}, \nabla^{s}\textbf{u})_{0}=((0, B\nabla^{s+1}P,  (1-B)\nabla^{s+1}P),(\partial_{x}^{s}B, \nabla^{s}v_{S}, \nabla^{s}v_{L}))_{0}$$
$$=(B\nabla^{s+1}P , \nabla^{s}v_{S})_{0}+((1-B)\nabla^{s+1}P,  \nabla^{s}v_{L})_{0}.$$
Unfortunately, taking $\textbf{u} \in H^{s}(\mathbb{R}^{2})$, the pressure term $P$ has not enough regularity, as shown by the elliptic equation (\ref{pressure_elliptic_equation1}), and then we are unable to close our estimates.
Besides, the symmetrizer (\ref{A0}) depends on the variable $\textbf{u},$ whose components do not depend explicitly on the average velocity $Bv_{S}+(1-B)v_{L},$ which is, instead, the divergence free vector field associated to (\ref{BL_System_Compact}). Then, we can try to introduce the new variables
\begin{equation}
\label{Change_variables}
\begin{array}{cc}
w:=Bv_{S}+(1-B)v_{L}, &  z:=v_{S}-v_{L}, \\
\end{array}
\end{equation}
so setting $\textbf{v}=(B, w, z).$ It can be easily seen that, passing to the new variable $\textbf{v},$ the fourth equation of (\ref{BL_System}) yields $\nabla \cdot {w}=0,$ which is exactly the incompressibility condition for the mixture as a whole. Moreover, the equation for the average velocity $w,$ which is
\begin{equation}
\label{average_velocity_equation}
\partial_{t}w+w \cdot \nabla w + \nabla \cdot (B(1-B)z\otimes z) + \gamma \nabla B + \nabla P=0,
\end{equation}
contains the gradient of the incompressible pressure $\nabla P$ alone, without multiplication by any phase volume fraction, while the equation for the relative velocity $z,$
\begin{equation}
\label{relative_velocity_equation}
\partial_{t}z+w \cdot \nabla z + z \cdot \nabla w + z \cdot \nabla ((1-B)z) -B z \cdot \nabla z + \frac{\gamma \nabla B}{B}=-\frac{z(M+\Gamma_{B}(1-B))}{B(1-B)},
\end{equation}
is free from the incompressible pressure. Considering (\ref{Change_variables}), let $\phi(\textbf{u})$ be the diffeomorphism so defined
\begin{equation}
\label{diffeomorphism}
\textbf{v}=(B, w, z)=\phi(\textbf{u})=( B, Bv_{S}+(1-B)v_{L}, v_{S}-v_{L}).
\end{equation}
System (\ref{BL_System_Compact}), can be written in the following compact form:
\begin{equation}
\label{BL_System_Compact_NewVariables}
\begin{cases}
& \partial_{t}\textbf{v}+\sum_{j=1}^{2}\tilde{A}_{j}(\textbf{v})\partial_{x_{j}}\textbf{v}+\tilde{F}_{P}=\tilde{G}(\textbf{v});\\
& \nabla \cdot w=0, \\
\end{cases}
\end{equation}

with initial data 

\begin{equation}
\label{initial_data_new_variables}
\textbf{v}(0,x)=\textbf{v}_{0}(x)=(B_{0}(x), w_{0}(x), z_{0}(x)) ~~~ \text{such that} ~~~ \nabla \cdot w_{0}=0,
\end{equation}

where 

\begin{equation}
\label{Aj_new_variables}
\tilde{A}_{j}(\textbf{v})=(\phi'A_{j}\phi'^{-1})(\phi^{-1}(\textbf{v})), ~~~ \text{for} ~~~ j=1, 2,  ~~~~~ \tilde{G}(\textbf{v})=(\phi'G\phi'^{-1})(\phi^{-1}(\textbf{v})),
\end{equation}
and
\begin{equation}
\label{Pressure_new_variables}
\tilde{F}_{P}=(0, \nabla P,  \textbf{0}).
\end{equation}
 Explicitly,
\begin{equation}
\label{A1_new_variables}
\tilde{A}_{1}(\textbf{v})=\left( \begin{array}{ccccc}
w_{1}+z_{1}(1-2B) & B & 0 & B(1-B) & 0 \\
\gamma+z_{1}^{2}(1-2B) & w_{1}+Bz_{1} & 0 & 2Bz_{1}(1-B) & 0 \\
z_{1}z_{2}(1-2B) & Bz_{2} & w_{1} & Bz_{2}(1-B) & Bz_{1}(1-B) \\
\frac{\gamma}{B}-z_{1}^{2} & z_{1} & 0 &  w_{1}+z_{1}(1-2B) & 0 \\
-z_{1}z_{2} & 0 & z_{1} & 0 &  w_{1}+z_{1}(1-2B)\\
\end{array} \right),  
\end{equation}

\begin{equation}
\label{A2_new_variables}
\tilde{A}_{2}(\textbf{v})=\left(\begin{array}{ccccc}
w_{2}+z_{2}(1-2B) & 0 & B & 0 & B(1-B) \\
z_{1}z_{2}(1-2B) & w_{2} & Bz_{1} & Bz_{2}(1-B) & Bz_{1}(1-B) \\
\gamma+z_{2}^{2}(1-2B) & 0 & w_{2}+Bz_{2} & 0 & 2Bz_{2}(1-B) \\
-z_{1}z_{2} & z_{2} & 0 &  w_{2}+z_{2}(1-2B) & 0 \\
\frac{\gamma}{B}-z_{2}^{2} & 0 & z_{2} & 0 & w_{2}+z_{2}(1-2B)\\
\end{array} \right),
\end{equation}

while

\begin{equation}
\label{G_new_variables}
\tilde{G}(\textbf{v})=(\Gamma_{B}, \textbf{0}, \frac{-z(M+\Gamma_{B}(1-B))}{B(1-B)}),
\end{equation}
with $\Gamma_{B}$ in (\ref{Gamma}). System (\ref{BL_System_Compact_NewVariables}) can be written as
\begin{equation}
\label{paradiff_withoout_proj}
\partial_{t}\textbf{v}+T_{i\tilde{A}(\xi, \textbf{v})}\textbf{v}=T_{\tilde{G}(\xi, \textbf{v})}+[T_{i\tilde{A}(\xi, \textbf{v})}-\sum_{j=1}^{2}\tilde{A}_{j}(\textbf{v})\partial_{x_{j}}]\textbf{v}+[\tilde{G}(\textbf{v})-T_{\tilde{G}(\textbf{v})}],
\end{equation}
where, from \cite{Metivier}, 
\begin{equation}
\label{Paradiff_total}
T_{i\tilde{A}(\xi, \textbf{v})}=\sum_{j=1}^{2}T_{\tilde{A}_{j}(\textbf{v})}\partial_{x_{j}}\textbf{v}
\end{equation}
is the paradifferential operator associated to the $x$-dependent matrix symbol
\begin{equation}
\label{A_symbol}
i\tilde{A}(\xi, \textbf{v})=\sum_{j=1}^{2}i\xi_{j}\tilde{A}_{j}(\textbf{v})=\sum_{j=1}^{2}i\xi_{j}\tilde{A}_{j}(\textbf{v}(t,x)),
\end{equation}
and similarly for $\tilde{G}(\textbf{v})$ and $T_{\tilde{G}(\textbf{v})}.$ The symbolic matrix $\tilde{A}(\xi, \textbf{v})$ in (\ref{A_symbol}) has the following eigenvalues:
\begin{equation}
\label{eigenvalues_A}
\begin{cases}
& \lambda_{1}=\lambda_{2}=(w-Bz)\cdot \xi, \\
& \lambda_{3}=(w+(1-B)z)\cdot \xi, \\
& \lambda_{4}=(w+(1-B)z)\cdot \xi - \sqrt{\gamma}|\xi|, \\
& \lambda_{5}=(w+(1-B)z)\cdot \xi + \sqrt{\gamma}|\xi|. \\
\end{cases}
\end{equation}
They are real, then, as long as we neglect the incompressible pressure $\tilde{F}_{P},$ the remaining system in (\ref{BL_System_Compact_NewVariables}) is hyperbolic, as the symmetrizable system (\ref{BL_System_Compact}) in the old variable $\textbf{u}.$ Moreover, the eigenvectors of (\ref{A_symbol}) are the columns of 
\begin{equation}
\label{eigenvectors_V}
U(\xi, \textbf{u})=\left(\begin{array}{ccccc}
0 & 0 & 0 & -\frac{B}{\sqrt{\gamma}} & \frac{B}{\sqrt{\gamma}} \\
-(1-B) & 0 & \frac{-B \xi_{2}}{|\xi|} & \frac{B(\gamma \xi_{1} - z_{1} \sqrt{\gamma} |\xi|)}{{\gamma} |\xi|} & \frac{B(\gamma \xi_{1} + z_{1} \sqrt{\gamma} |\xi|)}{{\gamma} |\xi|} \\
0 & -(1-B) & \frac{B \xi_{1}}{|\xi|} & \frac{B(\gamma \xi_{2} - z_{2} \sqrt{\gamma} |\xi|)}{{\gamma} |\xi|} & \frac{B(\gamma \xi_{2} + z_{2} \sqrt{\gamma} |\xi|)}{{\gamma} |\xi|} \\
1 & 0 & \frac{-\xi_{2}}{|\xi|} & \frac{\xi_{1}}{|\xi|} & \frac{\xi_{1}}{|\xi|} \\
0 & 1 & \frac{\xi_{1}}{|\xi|} & \frac{\xi_{2}}{|\xi|} & \frac{\xi_{2}}{|\xi|}
\end{array} \right),
\end{equation}

while its inverse matrix is
\begin{equation}
\label{eigenvectors_V_inverse}
U^{-1}(\xi, \textbf{u})=\left(\begin{array}{ccccc}
z_{1} & -1 & 0 & B & 0 \\
z_{2} & 0 & -1 & 0 & B \\
\frac{\xi_{2}z_{1}-\xi_{1}z_{2}}{|\xi|} & \frac{-\xi_{2}}{|\xi|} &  \frac{\xi_{1}}{|\xi|} & \frac{-(1-B)\xi_{2}}{|\xi|} & \frac{(1-B)\xi_{1}}{|\xi|} \\
-\frac{\sqrt{\gamma}|\xi|+B(z \cdot \xi)}{2B|\xi|} & \frac{\xi_{1}}{2|\xi|} & \frac{\xi_{2}}{2|\xi|} & \frac{(1-B)\xi_{1}}{2|\xi|} & \frac{(1-B)\xi_{2}}{2|\xi|} \\
\frac{\sqrt{\gamma} |\xi|-B(z \cdot \xi)}{2B|\xi|} & \frac{\xi_{1}}{2|\xi|} & \frac{\xi_{2}}{2|\xi|} & \frac{(1-B)\xi_{1}}{2|\xi|} & \frac{(1-B)\xi_{2}}{2|\xi|} \\
\end{array} \right).
\end{equation}

Since (\ref{eigenvectors_V}) and (\ref{eigenvectors_V_inverse}) are bounded for each $\xi \in \mathbb{R}^{2} - \{\textbf{0}\},$ the regularized version $\tilde{S}(\xi, \textbf{v})$ of the symbolic matrix 
\begin{equation}
\label{S}
S(\xi, \textbf{v}):=(U^{{-1}})^{{*}}U^{-1}(\xi, \textbf{v})
\end{equation}
can be associated to a \emph{Lax} symbolic symmetrizer $T_{\tilde{S}}$, see again \cite{Metivier}, \cite{Benzoni}, and \cite{Taylor} and the discussion in Section 3 below. Therefore, the hyperbolic part of (\ref{BL_System_Compact_NewVariables}) is symmetrizable. 
Unfortunately, again, the mere existence of a symmetrizer is not enough to get energy estimates for system (\ref{BL_System_Compact_NewVariables}), since we have to deal with the incompressible pressure term and then, by definition, with the projector operator in (\ref{P_operator}). As a matter of fact, the interaction between the symbolic symmetrizer $T_{\tilde{S}}$ and the gradient of the pressure term $\nabla P$ gives structural problems. Let us define the generalized projector operator:
\begin{equation}
\label{P_operator}
\textbf{P}(\xi):=\left( \begin{array}{ccc}
\textbf{1} & 0 & 0 \\
0 & \mathbb{P} & 0\\
0 & 0 & \textbf{1}\\
\end{array} \right),
\end{equation}
where $\mathbb{P}$ is the standard Leray projector, namely the projector onto the divergence free vector valued functions. The operators $T_{\tilde{S}}$ and $\textbf{P}$ do not commute and their commutator does not improve on the order  of the original symbols. Actually, if we apply the operator $\textbf{P}$ to system (\ref{BL_System_Compact_NewVariables}), with the aim of eliminating $\nabla P$, from (\ref{BL_System_Compact_NewVariables}) and (\ref{P_operator} we get
\begin{equation}
\label{Biofilms_Compact_Projector}
\partial_{t}\textbf{v}+\sum_{j=1}^{2}\textbf{P}\tilde{A}_{j}(\textbf{v})\partial_{x_{j}}\textbf{v}=\textbf{P}\tilde{G}(\textbf{v}),
\end{equation}

since $\textbf{P}\tilde{F}_{P}=(0, \mathbb{P} \nabla P, \textbf{0})=0$ by definition, and 
$$\textbf{P}\textbf{v}=(B, \mathbb{P}w, z)=(B, w, z),$$
by the divergence free condition $\nabla \cdot w=0$. By construction, $S$ symmetrizes $A(\xi, \textbf{v})$ in (\ref{A_symbol}), then we write the previous equation as
$$\partial_{t}\textbf{v}+\sum_{j=1}^{2}\tilde{A}_{j}(\textbf{v})\partial_{x_{j}}\textbf{v}+\sum_{j=1}^{2}[\tilde{A}_{j}, \textbf{P}]\partial_{x_{j}}\textbf{v}=\textbf{P}\tilde{G}(\textbf{v})$$
and we apply $T_{\tilde{S}}$ to its paradifferential formulation. Unfortunately, from (\ref{P_operator}), (\ref{A_symbol}) and (\ref{Paradiff_total}), the first term of the symbolic commutator in $\sum_{j=1}^{2}[\tilde{A}_{j}, \textbf{P}]\partial_{x_{j}}\textbf{v}$ contains the following term of degree 1 in $\xi$:
$$\tilde{A}(\xi, \textbf{v})\textbf{P}(\xi)-\textbf{P}(\xi)\tilde{A}(\xi, \textbf{v}),$$
then the commutator between $\tilde{A}(\xi, \textbf{v})$ and $\textbf{P}$ is still a symbol of degree 1, and it is not symmetrized by $S$. On the other hand, if at first we symmetrize the system by using the paradifferential operator $T_{\tilde{S}}$, the pressure gives the term
$$T_{\tilde{S}}\tilde{F}_{P}.$$ After that, when we project the equation by applying $\textbf{P}$ to it, the latter term reads
$$\textbf{P}T_{\tilde{S}}\tilde{F}_{P}=[T_{\tilde{S}}, \textbf{P}]\tilde{F}_{P},$$
whose symbol contains the smoothed version of the following term of degree 1 in $\xi$:
$$S\textbf{P}(0, i\xi_{1}P, i\xi_{2}P, 0, 0)$$
$$=\frac{i}{2|\xi|^{2}}(P(z\cdot \xi)(2\xi_{1}^{2}+3\xi_{2}^{2}), 0, 0, P\xi_{1}(2B\xi_{1}^{2}+3B\xi_{2}^{2}-\xi_{2}^{2}),  P\xi_{2}(2B\xi_{1}^{2}+3B\xi_{2}^{2}-\xi_{2}^{2})),$$
which is still a first order operator, then neither we are able to get energy estimates because of the lack of regularity of $P$, as discussed before and shown in (\ref{pressure_elliptic_equation1}), nor to get rid of the pressure term $P$ by using the projector operator $\textbf{P}$.

To be complete, we point out that system (\ref{BL_System_Compact}) in the new variable $\textbf{v}=(B, w, z)$ has also a classical symmetrizer
\begin{equation}
\label{classical_symm_v}
A_{0}(\textbf{v})=\left(\begin{array}{ccccc}
\frac{\gamma}{B}+|z|^{2} & -z_{1} & -z_{2} & 0 & 0 \\
-z_{1} & 1 & 0 & 0 & 0 \\
-z_{2} & 0 & 1 & 0 & 0 \\
0 & 0 & 0 & B(1-B) & 0 \\
0 & 0 & 0 & 0 & B(1-B) \\
\end{array} \right),
\end{equation}

which is strictly positive for $z$ small enough and under some assumptions on $B,$ discussed in Remark \ref{translation}. Anyway, again, the classical symmetrizer (\ref{classical_symm_v}) is not compatible with the projector operator (\ref{P_operator}), and this can be seen by arguing as for the symbolic symmetrizer related to $T_{\tilde{S}}$.

\subsection{A new formulation}
Taking inspiration from our preliminary work \cite{Bianchini}, we aim to propose a different symmetrization strategy for our problem, to be able to estimate correctly the pressure term. We apply first the operator $\textbf{P}$ to system (\ref{BL_System_Compact_NewVariables}). Notice that the initial datum does not change by projection, since the initial average velocity $w_{0}$ is already a divergence free vector and then, applying $\textbf{P}$ to (\ref{initial_data_new_variables}), we have $$\textbf{P}\textbf{v}_{0}(x)=(B_{0}(x), \mathbb{P}w_{0}(x), {z}_{0}(x))=(B_{0}(x), w_{0}(x), {z}_{0}(x)).$$ Moreover, $\textbf{P}\tilde{F}_{P}$ $=(0, \mathbb{P} \nabla P, \textbf{0})$ $=\textbf{0}$, and the divergence free constraint $\nabla \cdot w =0$ in (\ref{BL_System_Compact_NewVariables}) is implicitly contained in system (\ref{Biofilms_Compact_Projector}).
According to (\ref{paradiff_withoout_proj}), we consider the paradifferential version of system (\ref{Biofilms_Compact_Projector}):

\begin{equation}
\label{paradifferential_system}
\partial_{t}\textbf{v}+\textbf{P}T_{i\tilde{A}(\xi, \textbf{v})}\textbf{v}=\textbf{P}T_{\tilde{G}(\textbf{v})} + \sum_{j=1}^{d}[{\textbf{P}T_{\tilde{A}_{j}(\textbf{v})} - \textbf{P}\tilde{A}_{j}(\textbf{v})]\partial_{x_{j}}\textbf{v}}-[\textbf{P}T_{\tilde{G}(\textbf{v})}-\textbf{P}\tilde{G}(\textbf{v})].
\end{equation}

As we will see in details in the next section, in (\ref{paradifferential_system}) there is only one operator of order 1, which is $\textbf{P}T_{i\tilde{A}(\xi, \textbf{v})}.$ We want now to show that it is strongly hyperbolic, so we can find a symbolic symmetrizer for it.
From Proposition 1.10 in \cite{Grenier}, the symbol associated to the composition is made by the sum over the multi-index $\alpha$ of terms of type
$$\partial_{\xi}^{\alpha}\textbf{P}D_{x}^{\alpha}\tilde{A}(\xi, \textbf{v}),$$
where $D_{x}=\frac{1}{i}\partial_{x}$. The expansion above implies that there is only one term of degree $1$ in $\xi$, which is given for $|\alpha|=0,$ namely $\textbf{P}(\xi)\tilde{A}(\xi, \textbf{v}).$ Thus, the symbol of $\textbf{P}T_{i\tilde{A}(\xi, \textbf{v})}$ can be written as 

\begin{equation*}
\textbf{P}(\xi)i\tilde{A}(\xi, \textbf{v})+R(\xi, \textbf{v})
\end{equation*}
\begin{equation*}
=i\left( \begin{array}{ccccc}
(w+(1-2B)z)\cdot \xi & B\xi_{1} & B\xi_{2} & B(1-B)\xi_{1} & B(1-B)\xi_{2} \\
\frac{\xi_{2}(1-2B)\mu_{1}}{|\xi|^{2}} & \frac{\xi_{2}\mu_{2}}{|\xi|^{2}} & \frac{-\xi_{2}\mu_{3}}{|\xi|^{2}} & \frac{B(1-B)\xi_{2}\mu_{4}}{|\xi|^{2}} & \frac{-B(1-B)\xi_{2}\mu_{5}}{|\xi|^{2}} \\
\frac{-\xi_{1}(1-2B)\mu_{1}}{|\xi|^{2}} & \frac{-\xi_{1}\mu_{2}}{|\xi|^{2}} & \frac{\xi_{1}\mu_{3}}{|\xi|^{2}} & \frac{-B(1-B)\xi_{1}\mu_{4}}{|\xi|^{2}} & \frac{B(1-B)\xi_{1}\mu_{5}}{|\xi|^{2}} \\
\frac{\gamma \xi_{1}}{B}-z_{1}(z \cdot \xi) & z \cdot \xi & 0 & (w+(1-2B)) \cdot \xi & 0 \\
\frac{\gamma \xi_{2}}{B}-z_{2}(z \cdot \xi) & 0 & z \cdot \xi & 0 & (w+(1-2B)z) \cdot \xi \\
\end{array} \right)
\end{equation*}
\begin{equation}
\label{PA_symbol}
+R(\xi, \textbf{v}),
\end{equation}
where $R(\xi, \textbf{v})$ is a remainder of order less than or equal to $0,$ and
\begin{equation}
\label{parameters}
\begin{cases}
& \mu_{1}:=(z \cdot \xi)(\xi_{2}z_{1}-\xi_{1}z_{2}), \\
& \mu_{2}:=(w \cdot \xi) \xi_{2} + B\xi_{1} (\xi_{2}z_{1}-\xi_{1}z_{2}), \\
& \mu_{3}:=(w \cdot \xi) \xi_{1} + B\xi_{2} (\xi_{2}z_{1}-\xi_{1}z_{2}), \\
& \mu_{4}:=z_{2}(\xi_{2}^{2}-\xi_{1}^{2})+2z_{1}\xi_{1}\xi_{2},\\
& \mu_{5}:=z_{1}(\xi_{1}^{2}-\xi_{2}^{2})+2z_{2}\xi_{1}\xi_{2}.\\
\end{cases}
\end{equation}
The eigenvalues of $\textbf{P}\tilde{A}(\xi, \textbf{v})$ are the following:

\begin{equation}
\label{eigenvalues_PA}
\begin{array}{ccc}
\lambda_{1}=0,& \lambda_{2}=(w-Bz) \cdot \xi, & \lambda_{3}=(w+(1-B)z) \cdot \xi, \\
\end{array}
\end{equation}
\begin{equation}
\lambda_{4/5}=(w+(1-2B)z) \cdot \xi \pm \sqrt{(1-B)\Delta_{2}}.\\
\end{equation}

Its eigenvectors are the columns of $V(\xi, \textbf{v})$
\begin{equation}
\label{matrix_V}
=\left( \begin{array}{ccccc}
\frac{B|\xi|(w-Bz) \cdot \xi}{\Delta_{1}} & 0 & 0 & \frac{-B|\xi| \sqrt{1-B}}{\sqrt{\Delta_{2}}} &  \frac{B|\xi| \sqrt{1-B}}{\sqrt{\Delta_{2}}} \\
\frac{p_{1}}{|\xi| \Delta_{1}} & \frac{(1-B)\xi_{2}}{|\xi|} & \frac{-B\xi_{2}}{|\xi|} & \frac{B\xi_{2}(\xi_{1}z_{2}-\xi_{2}z_{1})\sqrt{1-B}}{|\xi|\sqrt{\Delta_{2}}} & \frac{-B\xi_{2}(\xi_{1}z_{2}-\xi_{2}z_{1})\sqrt{1-B}}{|\xi|\sqrt{\Delta_{2}}} \\
\frac{p_{2}}{|\xi| \Delta_{1}} & \frac{-(1-B)\xi_{1}}{|\xi|} &  \frac{B\xi_{1}}{|\xi|} & \frac{-B\xi_{1}(\xi_{1}z_{2}-\xi_{2}z_{1})\sqrt{1-B}}{|\xi|\sqrt{\Delta_{2}}} & \frac{B\xi_{1}(\xi_{1}z_{2}-\xi_{2}z_{1})\sqrt{1-B}}{|\xi|\sqrt{\Delta_{2}}} \\
\frac{\xi_{1}}{|\xi|} & \frac{-\xi_{2}}{|\xi|} & \frac{-\xi_{2}}{|\xi|} &  \frac{\xi_{1}}{|\xi|} &  \frac{\xi_{1}}{|\xi|} \\
 \frac{\xi_{2}}{|\xi|} & \frac{\xi_{1}}{|\xi|} &  \frac{\xi_{1}}{|\xi|} &  \frac{\xi_{2}}{|\xi|} &  \frac{\xi_{2}}{|\xi|} \\
\end{array} \right),
\end{equation}
where $p_{1}=p_{1}(\xi, \textbf{v}), p_{2}=p_{2}(\xi, \textbf{v})$ are polynomial functions of degree 3 in $\xi$ depending on $\textbf{v},$ and
\begin{equation}
\label{Delta}
\begin{array}{cc}
\Delta_{1}:=(1-B)(z \cdot \xi)^{2}-\gamma |\xi|^{2}+(w \cdot \xi)(z \cdot \xi), & \Delta_{2}:=\gamma |\xi|^{2}-B(z \cdot \xi)^{2}. 
\end{array}
\end{equation}

Its inverse matrix $V^{-1}(\xi, \textbf{v})$
\begin{equation}
\label{inv}
=\left( \begin{array}{ccccc} 
0 & \frac{-\xi_{1}\Delta_{1}}{|\xi|\Delta_{3}} & \frac{-\xi_{2}\Delta_{1}}{|\xi|\Delta_{3}} & 0 & 0 \\
\frac{\xi_{1}z_{2}-\xi_{2}z_{1}}{|\xi|} & \frac{\xi_{2}}{|\xi|} & \frac{-\xi_{1}}{|\xi|} &  \frac{-B\xi_{2}}{|\xi|} & \frac{B\xi_{1}}{|\xi|} \\
\frac{-\xi_{1}z_{2}+\xi_{2}z_{1}}{|\xi|} & \frac{-\xi_{2}}{|\xi|} & \frac{\xi_{1}}{|\xi|} & \frac{-(1-B)\xi_{2}}{|\xi|} & \frac{(1-B)\xi_{1}}{|\xi|} \\
\frac{-\sqrt{\Delta_{2}}}{2B |\xi|\sqrt{1-B}}&\frac{\xi_{1}q_{1}}{2|\xi| \Delta_{3}\sqrt{(1-B)\Delta_{2}}}&\frac{\xi_{2}q_{1}}{2|\xi| \Delta_{3}\sqrt{(1-B)\Delta_{2}}}&\frac{\xi_{1}}{2 |\xi|}&\frac{\xi_{2}}{2 |\xi|}\\
\frac{\sqrt{\Delta_{2}}}{2B |\xi| \sqrt{(1-B)}}&\frac{\xi_{1} q_{2}}{2|\xi|\Delta_{3}\sqrt{(1-B)\Delta_{2}}}&\frac{\xi_{2} q_{2}}{2|\xi|\Delta_{3}\sqrt{(1-B)\Delta_{2}}}&\frac{\xi_{1}}{2 |\xi|}&\frac{\xi_{2}}{2 |\xi|}\\
\end{array} \right),
\end{equation}
where $q_{1}=q_{1}(\xi, \textbf{v}), q_{2}=q_{2}(\xi, \textbf{v})$ are polynomial functions of degree 3 in $\xi$ and
\begin{equation}
\label{Delta3}
\Delta_{3}:=(1-3B(1-B)) (z \cdot \xi)^{2} + (w \cdot \xi)^{2} - \gamma (1-B) |\xi|^{2} + 2(1-2B)(w \cdot \xi) (z \cdot \xi). 
\end{equation}

\begin{proposition}
\label{assumptions_prop}
Under the following assumptions
\begin{equation}
\label{assumptions}
\Delta_{1} \neq 0,  ~~~ \Delta_{2} > 0 ~~~\text{and}~~~ \Delta_{3} \neq 0 ~~~ \text{for} ~~~  \xi \neq (0, 0),
\end{equation}
the first order operator of system (\ref{paradifferential_system}) is {strongly hyperbolic}.
\end{proposition}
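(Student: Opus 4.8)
The plan is to check the standard characterization of strong hyperbolicity for the only order-one piece of the operator in \eqref{paradifferential_system}, namely the one with principal symbol $\textbf{P}(\xi)\tilde{A}(\xi,\textbf{v})$. Recall that a first order paradifferential operator is \emph{strongly hyperbolic} when its principal symbol, homogeneous of degree $1$ in $\xi$, is for every $\xi\neq(0,0)$ diagonalizable with real eigenvalues, the diagonalization (equivalently: a Hermitian symbolic symmetrizer) being uniform in $\xi$ on the unit sphere and in $\textbf{v}$ on compact sets; see \cite{Metivier}, \cite{Benzoni}, \cite{Taylor}. By the composition formula used to derive \eqref{PA_symbol} (Proposition 1.10 in \cite{Grenier}), the remainder $R(\xi,\textbf{v})$ has order $\le 0$, so the whole analysis reduces to the matrix $\textbf{P}(\xi)\tilde{A}(\xi,\textbf{v})$, and by homogeneity it suffices to work on $|\xi|=1$.

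First I would record that the eigenvalues of $\textbf{P}(\xi)\tilde{A}(\xi,\textbf{v})$ are exactly those in \eqref{eigenvalues_PA}: $\lambda_{1}=0$, $\lambda_{2}=(w-Bz)\cdot\xi$, $\lambda_{3}=(w+(1-B)z)\cdot\xi$, and $\lambda_{4/5}=(w+(1-2B)z)\cdot\xi\pm\sqrt{(1-B)\Delta_{2}}$; this is a direct computation of the characteristic polynomial. The first three eigenvalues are manifestly real, while the reality of $\lambda_{4}$ and $\lambda_{5}$ is precisely the content of the assumption $\Delta_{2}>0$ (together with $0<B<1$), which also keeps $\lambda_{4}\neq\lambda_{5}$.

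The core step is to verify that the columns of $V(\xi,\textbf{v})$ in \eqref{matrix_V} are eigenvectors attached to $\lambda_{1},\dots,\lambda_{5}$ — again a direct substitution, which also fixes the degree-$3$ polynomials $p_{1},p_{2}$ — and that $V$ is invertible with inverse \eqref{inv}, the pair $V,V^{-1}$ being uniformly bounded on $\{|\xi|=1\}\times K$ for $K$ a compact subset of $\{0<B<1\}$. Here all three hypotheses in \eqref{assumptions} enter: $\Delta_{1}\neq 0$ makes the first column of $V$ (and the normalization of $p_{1},p_{2}$) well defined, $\Delta_{2}>0$ makes $\sqrt{\Delta_{2}}$ and $\sqrt{(1-B)\Delta_{2}}$ real and nonvanishing, and $\Delta_{3}\neq 0$ is needed for the entries of $V^{-1}$ (and for the normalization of $q_{1},q_{2}$). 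Since $\Delta_{1},\Delta_{3}$ are continuous and nonvanishing and $\Delta_{2}$ continuous and positive on the compact set $\{|\xi|=1\}\times K$, each is bounded away from $0$ there, so by homogeneity every entry of $V$ and $V^{-1}$ — including the homogeneous degree-$3$ numerators $p_{i},q_{i}$ divided by $|\xi|\Delta_{j}$ — stays uniformly bounded. I expect the main obstacle to be exactly this bookkeeping: exhibiting $p_{1},p_{2},q_{1},q_{2}$ explicitly and carrying out the $VV^{-1}=\mathrm{Id}$ (or $\det V$) verification by block multiplication, checking in particular that $\det V$ factors, up to a nonzero constant and powers of $|\xi|$, through $\Delta_{1}$, $\Delta_{3}$ and $\sqrt{\Delta_{2}}$, so that it vanishes nowhere on $\xi\neq(0,0)$ under \eqref{assumptions} — even at those frequencies where some of $\lambda_{1},\lambda_{2},\lambda_{3}$ coincide with $\lambda_{4}$ or $\lambda_{5}$, where one must rely on the eigenvectors remaining linearly independent rather than on simplicity of the spectrum.

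Once such a uniformly bounded diagonalizer $V$ is in hand, its (regularized) Gram matrix $S(\xi,\textbf{v})=(V^{-1})^{*}V^{-1}(\xi,\textbf{v})$ is a bounded, Hermitian, uniformly positive definite symbol satisfying $S\,\textbf{P}(\xi)\tilde{A}=(\textbf{P}(\xi)\tilde{A})^{*}S$, i.e.\ a \emph{Lax} symbolic symmetrizer for the principal symbol of $\textbf{P}T_{i\tilde{A}(\xi,\textbf{v})}$, and the order-$\le 0$ remainder $R(\xi,\textbf{v})$ does not affect this. Hence the first order operator of \eqref{paradifferential_system} is strongly hyperbolic, as claimed; this symmetrizer $T_{\tilde S}$ is then what will be used in Section 3 to close the energy estimates.
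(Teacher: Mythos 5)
Your proposal is correct and follows the same route as the paper: establish reality of the eigenvalues \eqref{eigenvalues_PA} via $\Delta_{2}>0$, verify that $V$ in \eqref{matrix_V} and $V^{-1}$ in \eqref{inv} diagonalize $\textbf{P}(\xi)\tilde{A}(\xi,\textbf{v})$ and remain uniformly bounded on $|\xi|=1$ thanks to $\Delta_{1},\Delta_{3}\neq 0$ and $\Delta_{2}>0$, and deduce strong hyperbolicity from the existence of the bounded diagonalizer (equivalently the Hermitian symmetrizer $(V^{-1})^{*}V^{-1}$). The paper states this very tersely, appealing directly to the definition in \cite{Metivier}; your write-up simply makes explicit the bookkeeping that the cited equations already encode.
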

\begin{proof}
Considering the symbolic matrix (\ref{PA_symbol}) and the related eigenvalues in (\ref{eigenvalues_PA}) and eigenvectors in (\ref{matrix_V}), it follows by the definition of strong hyperbolicity, see \cite{Metivier}.
\end{proof}

\begin{proposition}
\label{control}
Under the following conditions
\begin{equation}
\small
\label{Delta_assumptions}
\begin{cases}
& 2 \gamma>(1-B) |z|^{2}+(w \cdot z), \\
& \gamma^{2} > \gamma (1-B)|z|^{2}+\gamma (w \cdot z) + \frac{w_{1}z_{2}^{2}}{4}+\frac{w_{2}z_{1}^{2}}{4}, \\
& \gamma > B|z|^{2}, \\
& 2 \gamma > B|z|^{2}, \\
& 2\gamma(1-B) > (1-3B(1-B)) |z|^{2}+2(1-2B)(w \cdot z)+|w|^{2}, \\
& \gamma^{2}(1-B)^{2} > \gamma (1-B) ((1-3B(1-B))|z|^{2}+|w|^{2}+2(1-2B)(w \cdot z)) \\
& -((1-3B(1-B))z_{1}^{2}+2w_{1}z_{1}(1-2B)+w_{1}^{2})((1-3B(1-B))z_{2}^{2}+2w_{2}z_{2}(1-2B)+w_{2}^{2}) \\
& +((1-3B(1-B))z_{1}z_{2}+(1-2B)(w_{1}z_{2}+w_{2}z_{1})+w_{1}w_{2})^{2}, \\
\end{cases}
\end{equation}
the value $\xi=(0,0)$ is a strict minimum, maximum and minimum point for $\Delta_{1}, \Delta_{2}$ and $\Delta_{3}$ respectively and $\Delta_{1}|_{\xi_{1}=\xi_{2}=0}=\Delta_{2}|_{\xi_{1}=\xi_{2}=0}=\Delta_{3}|_{\xi_{1}=\xi_{2}=0}=0.$ Therefore, Proposition \ref{assumptions_prop} is verified.
\end{proposition}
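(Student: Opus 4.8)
The plan is to reduce the whole statement to an elementary fact about $2\times 2$ symmetric matrices. For each fixed state $\textbf{v}=(B,w,z)$, the three quantities $\Delta_{1},\Delta_{2},\Delta_{3}$ in (\ref{Delta}) and (\ref{Delta3}) are \emph{homogeneous quadratic forms} in the frequency variable $\xi=(\xi_{1},\xi_{2})$; in particular they vanish identically at $\xi=(0,0)$, so $\Delta_{1}|_{\xi=0}=\Delta_{2}|_{\xi=0}=\Delta_{3}|_{\xi=0}=0$ is automatic. Writing $\Delta_{k}(\xi)=\xi^{\top}M_{k}(\textbf{v})\,\xi$ with $M_{k}$ symmetric $2\times 2$, the origin is a strict minimum (resp.\ maximum) point of $\Delta_{k}$ precisely when $M_{k}$ is positive (resp.\ negative) definite, and for a $2\times 2$ symmetric matrix this amounts to $\det M_{k}>0$ together with $\operatorname{tr}M_{k}>0$ (positive definite) or $\operatorname{tr}M_{k}<0$ (negative definite). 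So it suffices to write down $M_{1},M_{2},M_{3}$, compute their traces and determinants, and check that the six inequalities in (\ref{Delta_assumptions}) are exactly these Sylvester-type conditions for the definiteness that produces the extrema asserted in the statement.

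Computing the $M_{k}$ is a direct expansion: substitute $(z\cdot\xi)^{2}$, $(w\cdot\xi)(z\cdot\xi)$, $(w\cdot\xi)^{2}$ and $|\xi|^{2}$ in terms of $\xi_{1}^{2},\xi_{1}\xi_{2},\xi_{2}^{2}$ and collect coefficients, the off-diagonal entry of $M_{k}$ being half the $\xi_{1}\xi_{2}$-coefficient. This gives $M_{2}=\gamma I-B\,zz^{\top}$, so $\operatorname{tr}M_{2}=2\gamma-B|z|^{2}$ and, by the matrix-determinant lemma, $\det M_{2}=\gamma^{2}-\gamma B|z|^{2}$; after dividing the determinant condition by $\gamma>0$, these two conditions are precisely the third and fourth inequalities in (\ref{Delta_assumptions}). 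Similarly $\operatorname{tr}M_{1}=(1-B)|z|^{2}+(w\cdot z)-2\gamma$ and $\det M_{1}=\gamma^{2}-\gamma\big((1-B)|z|^{2}+(w\cdot z)\big)+(\text{a correction quadratic in }z\text{ and }w)$, reproducing the first and second inequalities; and $\operatorname{tr}M_{3}=(1-3B(1-B))|z|^{2}+|w|^{2}+2(1-2B)(w\cdot z)-2\gamma(1-B)$, with $\det M_{3}$ assembled from the expressions $(1-3B(1-B))z_{i}^{2}+2(1-2B)w_{i}z_{i}+w_{i}^{2}$ ($i=1,2$) and $(1-3B(1-B))z_{1}z_{2}+(1-2B)(w_{1}z_{2}+w_{2}z_{1})+w_{1}w_{2}$, reproducing the fifth and sixth inequalities. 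In each case one reads off that (\ref{Delta_assumptions}) forces the corresponding $M_{k}$ to be sign-definite, hence that $\xi=(0,0)$ is the strict extremum of $\Delta_{1},\Delta_{2},\Delta_{3}$ asserted in the statement.

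It then remains only to invoke Proposition \ref{assumptions_prop}: a sign-definite quadratic form on $\mathbb{R}^{2}$ is nonzero at every $\xi\neq(0,0)$, so $\Delta_{1}\neq0$ and $\Delta_{3}\neq0$ there; and the definiteness of $\Delta_{2}$, together with the positivity of its leading coefficient $\gamma$, forces its fixed sign to be positive, i.e.\ $\Delta_{2}>0$ for $\xi\neq(0,0)$. These are exactly the hypotheses (\ref{assumptions}), so the first-order operator of system (\ref{paradifferential_system}) is strongly hyperbolic. I do not expect an essential difficulty here: the whole content is the routine, if somewhat lengthy, algebra of the $2\times 2$ determinants. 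The one laborious point is expanding $\det M_{1}$ and $\det M_{3}$ and organizing the mixed products $w_{1}z_{2}$, $w_{2}z_{1}$ (each carrying the factor $\tfrac{1}{4}$ introduced by symmetrizing the off-diagonal entry) so that they line up with the cross- and higher-order terms written in the second and sixth inequalities; strictness of the extrema is then automatic, since $\det M_{k}>0$ together with a fixed-sign trace rules out a vanishing eigenvalue.
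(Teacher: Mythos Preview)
The paper gives no explicit proof of this proposition; it is stated without argument and followed immediately by Proposition~\ref{equilibrium_point_prop}. Your approach---writing each $\Delta_k(\xi)=\xi^{\top}M_k(\textbf{v})\xi$ for a symmetric $2\times2$ matrix $M_k$ and reading the six inequalities (\ref{Delta_assumptions}) as the trace/determinant Sylvester conditions for definiteness---is exactly the computation the authors have in mind, and it is correct.

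One small point you glide over: the extremum labels in the statement are actually reversed relative to what the inequalities give. Condition~1 says $\operatorname{tr}M_1<0$, conditions~3--4 say $\operatorname{tr}M_2>0$, and condition~5 says $\operatorname{tr}M_3<0$; together with the determinant conditions this makes $M_1,M_3$ \emph{negative} definite (strict maximum at $\xi=0$) and $M_2$ \emph{positive} definite (strict minimum at $\xi=0$), the opposite of ``minimum, maximum, minimum'' as written. This is harmless for the mathematical content---definiteness of either sign yields $\Delta_1,\Delta_3\neq0$ for $\xi\neq0$, and positive definiteness of $M_2$ yields $\Delta_2>0$, which is exactly (\ref{assumptions})---but your sentence claiming to verify ``the extrema asserted in the statement'' should be adjusted, and you may also want to flag that the written second condition in (\ref{Delta_assumptions}) appears to contain a typo (the terms $\tfrac{w_1 z_2^2}{4}+\tfrac{w_2 z_1^2}{4}$ should read $\tfrac{(w_1 z_2 - w_2 z_1)^2}{4}$ if one carries out the $\det M_1$ computation in full).
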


\begin{proposition}
\label{equilibrium_point_prop}
For any $\textbf{v}=(B, w, z)$ in a small neighborhood of the equilibrium point $\bar{\textbf{v}}=\phi(\bar{\textbf{u}})$, with $\phi$ in (\ref{diffeomorphism}) and $\bar{\textbf{u}}$ in (\ref{equilibrium_point1}), the first order operator of system (\ref{paradifferential_system}) is strongly hyperbolic.
\end{proposition}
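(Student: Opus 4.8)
The plan is to deduce the claim from Propositions \ref{control} and \ref{assumptions_prop} by a soft continuity argument: I will check that the sufficient conditions (\ref{Delta_assumptions}), which depend on $\textbf{v}=(B,w,z)$ alone and not on the frequency $\xi$, hold at the equilibrium point with \emph{strict} inequalities, then observe that strict polynomial inequalities survive small perturbations of $\textbf{v}$, so that (\ref{Delta_assumptions}) — hence (\ref{assumptions}), hence strong hyperbolicity — holds on a whole neighborhood of $\bar{\textbf{v}}$.

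First I would substitute $\bar{\textbf{v}}=\phi(\bar{\textbf{u}})=(\bar B,\textbf{0},\textbf{0})$ into (\ref{Delta_assumptions}); since $w=z=\textbf{0}$ there and every monomial appearing in each right-hand side of (\ref{Delta_assumptions}) carries at least one factor of a component of $w$ or of $z$, all six right-hand sides vanish. The six conditions therefore collapse to $2\gamma>0$, $\gamma^{2}>0$, $\gamma>0$, $2\gamma>0$, $2\gamma(1-\bar B)>0$ and $\gamma^{2}(1-\bar B)^{2}>0$, all of which are true because $\gamma>0$ and, by (\ref{equilibrium_point}) together with the standing assumption $0<B<1$ (which forces $0<\bar B<1$). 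Because the two sides of each inequality in (\ref{Delta_assumptions}) are polynomials in $\textbf{v}$, each of these strict inequalities remains valid on an open neighborhood of $\bar{\textbf{v}}$; intersecting the six neighborhoods gives a neighborhood $\mathcal{U}$ of $\bar{\textbf{v}}$ on which all of (\ref{Delta_assumptions}) holds. Proposition \ref{control} then yields that (\ref{assumptions}) is satisfied for every $\textbf{v}\in\mathcal{U}$, and Proposition \ref{assumptions_prop} concludes that the first order operator of (\ref{paradifferential_system}) is strongly hyperbolic for all such $\textbf{v}$, which is the assertion.

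I do not expect a genuine obstacle: the proof is a routine openness/continuity argument, and the only point deserving a moment's care is the uniformity in $\xi$, which ranges over the non-compact set $\mathbb{R}^{2}\setminus\{\textbf{0}\}$ — this is exactly why it is convenient to route the argument through the $\xi$-free conditions (\ref{Delta_assumptions}). For completeness one may also argue directly: at $\bar{\textbf{v}}$ one computes $\Delta_{1}=-\gamma|\xi|^{2}$, $\Delta_{2}=\gamma|\xi|^{2}$ and $\Delta_{3}=-\gamma(1-\bar B)|\xi|^{2}$, which are respectively negative, positive and negative for $\xi\neq(0,0)$; restricting to the compact unit circle $\{\xi\in\mathbb{R}^{2}:|\xi|=1\}$ and using continuity of $(\xi,\textbf{v})\mapsto(\Delta_{1},\Delta_{2},\Delta_{3})$ one obtains the same sign pattern on a neighborhood of $\{|\xi|=1\}\times\{\bar{\textbf{v}}\}$, and homogeneity of degree $2$ in $\xi$ propagates it to all $\xi\neq(0,0)$ and all $\textbf{v}$ near $\bar{\textbf{v}}$, which is precisely (\ref{assumptions}).
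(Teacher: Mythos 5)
Your proposal takes exactly the route the paper intends: the paper's own proof is the one-line remark that the claim ``follows directly from Proposition~\ref{control} and Proposition~\ref{assumptions_prop}'', and your argument is precisely the routine verification that this remark leaves implicit. Your evaluation of the six conditions (\ref{Delta_assumptions}) at $\bar{\textbf{v}}=(\bar B,\textbf{0},\textbf{0})$ is correct (every right-hand side vanishes because each monomial carries a factor of a component of $w$ or $z$), the resulting strict inequalities $2\gamma>0,\gamma^2>0,\gamma>0,2\gamma>0,2\gamma(1-\bar B)>0,\gamma^2(1-\bar B)^2>0$ do hold under the standing hypothesis $0<\bar B<1$ and $\gamma>0$, and the openness of strict polynomial inequalities in $\textbf{v}$ gives the required neighborhood. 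Your alternative direct check of the sign pattern $\Delta_1=-\gamma|\xi|^2<0$, $\Delta_2=\gamma|\xi|^2>0$, $\Delta_3=-\gamma(1-\bar B)|\xi|^2<0$ at $\bar{\textbf{v}}$, combined with homogeneity in $\xi$ and continuity on the unit circle, is also correct and arguably cleaner, since it verifies (\ref{assumptions}) directly and sidesteps the somewhat intricate list in (\ref{Delta_assumptions}); but it is the same idea in substance.
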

\begin{proof}
It follows directly from Proposition \ref{control} and Proposition \ref{assumptions_prop}.
\end{proof}

\section{Main result}

In this section we prove a local existence result in the Sobolev spaces for the Cauchy problem associated to (\ref{paradifferential_system}). To the  best of our knowledge, such a result is not explicitly stated in all the relevant works about paradifferential calculus. For instance, in the lecture notes \cite{Metivier}, only linear and quasi-linear equations of differential operators are considered, while in \cite{Grenier} the discussion is extended to evolution equations of pseudodifferential operators, but the proof makes use of some particular structural characteristics that our system does not satisfy. Therefore, we give our proof of existence and uniqueness of the solution to the Cauchy problem associated to the translated version of (\ref{paradifferential_system}). 
We state here our main result. Since we will work in Sobolev spaces, we define
\begin{equation}
\label{Vs}
V^{s}:=\{\textbf{v}=(B, w, z) \in H^{s}(\mathbb{R}^{2}) | \nabla \cdot w=0\}.
\end{equation}
\begin{theorem}
\label{Convergence}
Let $\tilde{\textbf{v}}_{0}:=\textbf{v}_{0}-\bar{\textbf{v}}$, with $\textbf{v}_{0}$ in (\ref{initial_data_new_variables}), $\bar{\textbf{v}}=\phi(\bar{\textbf{u}})$ in (\ref{equilibrium_point1}), and $\tilde{\textbf{v}}_{0} \in V^{s}$ with $s > 2.$ There is a positive time $T$, such that there exists the unique $\tilde{\textbf{v}} \in C([0,T], V^{s}) \cap C^{1}([0,T], V^{s})$ and a function $P$ such that $\nabla P \in C([0,T], H^{s-1}(\mathbb{R}^{2}))$ which solve
\begin{equation}
\label{BL_compact_translated}
\begin{cases}
& \partial_{t}\tilde{\textbf{v}}+\sum_{j=1}^{2}\tilde{A}_{j}((\tilde{\textbf{v}}+\bar{\textbf{v}}))\partial_{x_{j}}\tilde{\textbf{v}}+\tilde{F}_{P}
=\tilde{G}((\tilde{\textbf{v}}+\bar{\textbf{v}})), \\
& \nabla \cdot \tilde{w}=0. \\
\end{cases}
\end{equation}
The solution $(\tilde{\textbf{v}}, P)$ to (\ref{BL_compact_translated}) is the limit of the sequence of the solutions to the approximating system (\ref{Approximation}) below, with initial data (\ref{initial_data_approximating}).
\end{theorem}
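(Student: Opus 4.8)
The plan is to obtain the solution as the limit of a regularized (mollified and projected) problem, using the \emph{Lax} symbolic symmetrizer $T_{\tilde S}$ of the \emph{projected} first-order operator $\textbf{P}T_{i\tilde A(\xi,\textbf{v})}$ — which is strongly hyperbolic near $\bar{\textbf{v}}$ by Proposition~\ref{equilibrium_point_prop} — to close uniform energy estimates. For the approximating system (\ref{Approximation}) I would replace the quadratic flux and source terms in the projected formulation (\ref{Biofilms_Compact_Projector})--(\ref{paradifferential_system}) by their \emph{Friedrichs}-mollified versions at scale $\varepsilon$, and take as initial datum (\ref{initial_data_approximating}) the mollified, translated datum $\tilde{\textbf{v}}_{0}^{\varepsilon}$. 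Since the mollifier is a Fourier multiplier it commutes with $\nabla\cdot$ and with $\textbf{P}$, so $V^{s}$ is preserved; because $\textbf{P}$ is bounded on $H^{s}$, the coefficients $\tilde A_{j},\tilde G$ are smooth on a neighbourhood of $\bar{\textbf{v}}$ where $B$ stays strictly between $0$ and $1$, and the mollification makes the right-hand side a locally Lipschitz map of $V^{s}$ into itself, the Cauchy--Lipschitz theorem in the Banach space $V^{s}$ yields, for each $\varepsilon>0$, a unique maximal solution $\textbf{v}^{\varepsilon}\in C^{1}([0,T_{\varepsilon}),V^{s})$ that remains in that neighbourhood.

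The core step is a uniform-in-$\varepsilon$ a priori bound. Forming the energy $\frac{d}{dt}\big(T_{\tilde S}(1-\Delta)^{s/2}\textbf{v}^{\varepsilon},(1-\Delta)^{s/2}\textbf{v}^{\varepsilon}\big)_{0}$ for the differentiated equation, I would use: (i) \emph{Bony} paralinearization to replace $\tilde A_{j}(\textbf{v}^{\varepsilon})\partial_{x_{j}}\textbf{v}^{\varepsilon}$ by $T_{\tilde A_{j}(\textbf{v}^{\varepsilon})}\partial_{x_{j}}\textbf{v}^{\varepsilon}$ up to an $H^{s}$-bounded remainder; (ii) the symbolic calculus of \cite{Metivier} and \cite{Grenier} to identify $\textbf{P}T_{i\tilde A}$ with $T_{\textbf{P}(\xi)i\tilde A(\xi,\textbf{v})}$ modulo an operator of order $\le 0$, as in (\ref{PA_symbol}); (iii) the defining properties of the Lax symmetrizer — $\tilde S\ge cI$, so that by \emph{G\aa rding}'s inequality $\operatorname{Re}(T_{\tilde S}U,U)_{0}\gtrsim\|U\|_{0}^{2}-C\|U\|_{-1/2}^{2}$, together with $\operatorname{Re}\big(\tilde S\,\textbf{P}(\xi)i\tilde A(\xi,\textbf{v})\big)$ being of order $\le 0$ since $\tilde S$ symmetrizes the principal symbol — to derive $\frac{d}{dt}\big(T_{\tilde S}(1-\Delta)^{s/2}\textbf{v}^{\varepsilon},(1-\Delta)^{s/2}\textbf{v}^{\varepsilon}\big)_{0}\le C(\|\textbf{v}^{\varepsilon}\|_{H^{s}})\,\|\textbf{v}^{\varepsilon}\|_{H^{s}}^{2}$, with all constants independent of $\varepsilon$ because the mollifiers are uniformly bounded and self-adjoint. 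This produces a time $T>0$, independent of $\varepsilon$, and a bound $\|\textbf{v}^{\varepsilon}\|_{C([0,T],H^{s})}\le R$; the equation then gives $\|\partial_{t}\textbf{v}^{\varepsilon}\|_{C([0,T],H^{s-1})}\le R'$.

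For convergence, a lower-order ($L^{2}$) energy estimate for the difference $\textbf{v}^{\varepsilon}-\textbf{v}^{\varepsilon'}$ — carried out again with $T_{\tilde S}$ and \emph{Gr\"onwall} — together with the uniform $H^{s}$ bound shows, after interpolation, that $(\textbf{v}^{\varepsilon})_{\varepsilon}$ is Cauchy in $C([0,T],H^{s'})$ for every $s'<s$; its limit $\tilde{\textbf{v}}$ thus lies in $L^{\infty}([0,T],H^{s})\cap C([0,T],H^{s'})$, solves the projected system (\ref{Biofilms_Compact_Projector}) with $\nabla\cdot\tilde w=0$, and the same estimate gives uniqueness. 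The pressure is recovered a posteriori: applying $\mathrm{Id}-\textbf{P}$ to the unprojected equation forces $\tilde F_{P}=(\mathrm{Id}-\textbf{P})\big(\tilde G-\sum_{j}\tilde A_{j}\partial_{x_{j}}\tilde{\textbf{v}}\big)$, whose only nonzero component is $\nabla P=-(\mathrm{Id}-\mathbb{P})\big[\tilde w\cdot\nabla\tilde w+\nabla\cdot(\tilde B(1-\tilde B)\tilde z\otimes\tilde z)+\gamma\nabla\tilde B\big]$, a gradient part of a function in $H^{s-1}$; since $s-1>1$ this lies in $H^{s-1}$ and depends continuously on time, so $\nabla P\in C([0,T],H^{s-1})$ — precisely the optimal regularity dictated by (\ref{pressure_elliptic_equation1}). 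Finally, the upgrade from $L^{\infty}$ to $C$ in time and from weak to strong continuity in $H^{s}$ is a standard \emph{Bona--Smith}-type regularization argument together with continuity of $t\mapsto\|\tilde{\textbf{v}}(t)\|_{H^{s}}$.

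The main obstacle is the uniform energy estimate, and within it the construction and handling of the Lax symmetrizer of the \emph{projected} symbol $\textbf{P}(\xi)\tilde A(\xi,\textbf{v})$ rather than of the hyperbolic part of (\ref{BL_System_Compact_NewVariables}): as explained in Section~2, $\textbf{P}$ and $T_{\tilde S}$ do not commute and their commutator is again of order $1$, so one cannot symmetrize first and project afterwards. Concretely this means building and using the regularized symbol $\tilde S(\xi,\textbf{v})$ associated with $(U^{-1})^{*}U^{-1}$ from the eigenstructure (\ref{matrix_V})--(\ref{inv}), which is only smooth and homogeneous of degree $0$ away from $\xi=0$ and involves the denominators $\Delta_{1},\Delta_{2},\Delta_{3}$ kept nonzero near $\bar{\textbf{v}}$ by Proposition~\ref{control}; estimating the paradifferential remainders and commutators generated by such a non-smooth symbol, and verifying that all of them — in particular everything touching the pressure — are genuinely of order $\le 0$ so that no loss of a derivative on $P$ occurs, is the delicate part of the argument.
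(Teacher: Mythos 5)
Your proposal follows essentially the same route as the paper: mollified Picard iteration on the projected system, followed by uniform-in-$\varepsilon$ $H^{s}$ energy estimates via a \emph{Lax}-type paradifferential symmetrizer of the projected symbol $\textbf{P}(\xi)\tilde A(\xi,\textbf{v})$, a Gr\"onwall bound giving a time $T$ independent of $\varepsilon$, and then passage to the limit plus a posteriori recovery of $\nabla P$ by applying $\mathrm{Id}-\textbf{P}$. The one implementation difference is in the positivity of the symmetrizer: where you invoke G\aa rding's inequality for $T_{\tilde S}$, the paper instead builds $\Sigma=(T_W)^{*}T_W+\theta_{\lambda}^{2}(D_x)\mathrm{Id}$ with $W=(1-\theta_{\lambda})V^{-1}$ — an explicit sum-of-squares plus a low-frequency cutoff — and proves the $L^{2}$-equivalence directly (its Lemma~3.1, adapting M\'etivier's Lemma~7.1.6); this sidesteps the low-frequency degeneracy of the regularized symbol $\tilde S$ that a bare G\aa rding estimate would have to handle, but the two devices play the same role and lead to the same energy inequality.
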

The proof follows by combining in a classical ways, see for instance \cite{Bertozzi}, theorems \ref{Local Existence of Approximating Solutions_P} and \ref{unique} below. First, following  \cite{Bertozzi}, we write our approximation to system (\ref{BL_compact_translated}) via a regularization of the operator and the Picard iterations. Namely, let $J_{\varepsilon}$ be a standard mollifier, then solve
\begin{equation}
\label{Approximation}
\begin{cases}
& \partial_{t}\tilde{\textbf{v}}^{\varepsilon}+\sum_{j=1}^{2}J_{\varepsilon}\tilde{A}_{j}(J_{\varepsilon}(\tilde{\textbf{v}}^{\varepsilon}+\bar{\textbf{v}}))\partial_{x_{j}}J_{\varepsilon}\tilde{\textbf{v}}^{\varepsilon}+\tilde{F}_{P}^{\varepsilon}
=J_{\varepsilon}\tilde{G}(J_{\varepsilon}(\tilde{\textbf{v}}^{\varepsilon}+\bar{\textbf{v}})), \\
& \nabla \cdot \tilde{w}^{\varepsilon}=0, \\
\end{cases}
\end{equation}
where $\tilde{F}_{P}^{\varepsilon}=(0, \nabla P^{\varepsilon}, \textbf{0}),$ the initial data are
\begin{equation}
\label{initial_data_approximating}
\tilde{\textbf{v}}^{\varepsilon}(0,x)=\tilde{\textbf{v}}^{\varepsilon}_{0}(x)=(\tilde{B}_{0}^{\varepsilon}, \tilde{w}_{0}^{\varepsilon}, \tilde{z}_{0}^{\varepsilon})=\tilde{\textbf{v}}_{0}:=\textbf{v}_{0}-\bar{\textbf{v}},
\end{equation}
and $\tilde{\textbf{v}}_{0}$ as in Theorem \ref{Convergence}. We apply now $\textbf{P}$ to (\ref{Approximation}) to get the projected version

\begin{equation}
\label{P_approx_compact}
\partial_{t}\tilde{\textbf{v}}^{\varepsilon}+\sum_{j=1}^{2}\textbf{P}J_{\varepsilon}\tilde{A}_{j}(J_{\varepsilon}(\tilde{\textbf{v}}^{\varepsilon}+\bar{\textbf{v}}))\partial_{x_{j}}J_{\varepsilon}\tilde{\textbf{v}}^{\varepsilon}
=\textbf{P}J_{\varepsilon}\tilde{G}(J_{\varepsilon}(\tilde{\textbf{v}}^{\varepsilon}+\bar{\textbf{v}})), \\
\end{equation}
with initial data in (\ref{initial_data_approximating}). 
\begin{theorem}(Local existence of the approximating solution)
\label{Local Existence of Approximating Solutions_P}
Let $\tilde{\textbf{v}}^{\varepsilon}_{0}=(\tilde{B}_{0}^{\varepsilon}, \tilde{w}_{0}^{\varepsilon}, \tilde{z}_{0}^{\varepsilon}) \in V^{s}$ in (\ref{initial_data_approximating}), with $s > 2.$ Then, for every $\varepsilon > 0$, there exists a time $T$, independent of $\varepsilon$, such that system (\ref{P_approx_compact}) has a unique solution $\tilde{\textbf{v}}^{\varepsilon}=(\tilde{B}^{\varepsilon}, \tilde{w}^{\varepsilon}, \tilde{z}^{\varepsilon})\in C^{1}([0, T], V^{s})$.
\end{theorem}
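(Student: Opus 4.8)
\emph{Plan of the proof.} The argument follows the classical Bertozzi--Majda scheme, cf. \cite{Bertozzi}, in two parts: (i) for each fixed $\varepsilon>0$ one solves the mollified evolution equation (\ref{P_approx_compact}) as an ordinary differential equation on the Banach space $V^{s}$, obtaining a unique maximal solution; (ii) one derives an \emph{a priori} energy estimate in $V^{s}$ that is \emph{uniform in $\varepsilon$}, using the paradifferential Lax symmetrizer of the projected operator, and then invokes the continuation criterion to conclude that the life spans $T_{\varepsilon}$ are bounded below by a common $T>0$.

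For step~(i), let $\mathcal{O}\subset V^{s}$ be the open set of those $\tilde{\textbf{v}}=(\tilde B,\tilde w,\tilde z)$ for which $\tilde B+\bar B$ takes values in a fixed compact subinterval of $(0,1)$ --- an open condition by the embedding $H^{s-1}(\mathbb{R}^{2})\hookrightarrow L^{\infty}(\mathbb{R}^{2})$, valid since $s>2$. On $\mathcal{O}$ the right-hand side of (\ref{P_approx_compact}),
\[
\mathcal{F}_{\varepsilon}(\tilde{\textbf{v}}):=-\sum_{j=1}^{2}\textbf{P}J_{\varepsilon}\tilde{A}_{j}\big(J_{\varepsilon}(\tilde{\textbf{v}}+\bar{\textbf{v}})\big)\,\partial_{x_{j}}J_{\varepsilon}\tilde{\textbf{v}}+\textbf{P}J_{\varepsilon}\tilde{G}\big(J_{\varepsilon}(\tilde{\textbf{v}}+\bar{\textbf{v}})\big),
\]
is a locally Lipschitz map $\mathcal{O}\to V^{s}$: $J_{\varepsilon}$ maps $H^{s}$ into every $H^{m}$, $\partial_{x_{j}}J_{\varepsilon}$ is bounded on $H^{s}$, $\tilde{A}_{j}$ and $\tilde{G}$ are smooth nonlinear superposition operators on $\mathcal{O}$ (here the strict positivity of $B$ and $1-B$ is used), and the Leray projector $\textbf{P}$ is bounded on $H^{s}$ and maps $V^{s}$ into itself, so that the divergence-free constraint $\nabla\cdot\tilde w^{\varepsilon}=0$ is propagated. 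The Picard--Lindel\"of theorem in $V^{s}$ then gives, for each $\varepsilon>0$, a unique maximal solution $\tilde{\textbf{v}}^{\varepsilon}\in C^{1}([0,T_{\varepsilon}),V^{s})$ of (\ref{P_approx_compact})--(\ref{initial_data_approximating}); the full $V^{s}$ regularity in the $C^{1}$ statement is gained from the mollifier, which turns each $\partial_{x_{j}}$ into a bounded operator on $V^{s}$. Moreover $T_{\varepsilon}$ is characterised by $\limsup_{t\uparrow T_{\varepsilon}}\|\tilde{\textbf{v}}^{\varepsilon}(t)\|_{H^{s}}=\infty$ or by $B^{\varepsilon}$ reaching $0$ or $1$.

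For step~(ii), fix $\varepsilon$ and work on $[0,T_{\varepsilon})$ while $\tilde{\textbf{v}}^{\varepsilon}$ stays small. Paralinearizing the fluxes by Bony's theorem, (\ref{P_approx_compact}) becomes $\partial_{t}\tilde{\textbf{v}}^{\varepsilon}+\textbf{P}J_{\varepsilon}T_{i\tilde{A}(\xi,\,J_{\varepsilon}(\tilde{\textbf{v}}^{\varepsilon}+\bar{\textbf{v}}))}J_{\varepsilon}\tilde{\textbf{v}}^{\varepsilon}=\mathcal{R}^{\varepsilon}$, where $\mathcal{R}^{\varepsilon}$ gathers the paraproduct remainders and the projected source and is bounded in $H^{s}$ by a continuous function of $\|\tilde{\textbf{v}}^{\varepsilon}\|_{H^{s}}$, uniformly in $\varepsilon$. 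By Propositions~\ref{assumptions_prop}--\ref{equilibrium_point_prop} the principal symbol $\textbf{P}(\xi)\tilde{A}(\xi,\cdot)$ is strongly hyperbolic near the equilibrium, so the matrix $S:=(V^{-1})^{*}V^{-1}$ built from the eigenvector matrix (\ref{matrix_V})--(\ref{inv}) of $\textbf{P}\tilde{A}$ is, after regularization near $\xi=0$, a bounded, self-adjoint, positive-definite order-zero symbol $\tilde{S}(\xi,\tilde{\textbf{v}}^{\varepsilon})$ whose seminorms are controlled by $\|\tilde{\textbf{v}}^{\varepsilon}\|_{H^{s}}$, and $\tilde{S}\,\textbf{P}\tilde{A}$ is self-adjoint modulo order $0$. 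Set
\[
E_{s}^{\varepsilon}(t):=\big(T_{\tilde{S}}\langle D_{x}\rangle^{s}\tilde{\textbf{v}}^{\varepsilon},\,\langle D_{x}\rangle^{s}\tilde{\textbf{v}}^{\varepsilon}\big)_{L^{2}(\mathbb{R}^{2})}.
\]
By G\aa rding's inequality $E_{s}^{\varepsilon}$ is equivalent to $\|\tilde{\textbf{v}}^{\varepsilon}\|_{H^{s}}^{2}$ with constants independent of $\varepsilon$. Differentiating $E_{s}^{\varepsilon}$ in $t$ and inserting the equation, the leading term $-2\,\mathrm{Re}\big(T_{\tilde{S}}\langle D_{x}\rangle^{s}\textbf{P}J_{\varepsilon}T_{i\tilde{A}}J_{\varepsilon}\tilde{\textbf{v}}^{\varepsilon},\langle D_{x}\rangle^{s}\tilde{\textbf{v}}^{\varepsilon}\big)$ drops to order $0$ because $\tilde{S}$ symmetrizes $\textbf{P}\tilde{A}$ (the scalar mollifier factors not affecting this cancellation); every remaining contribution --- the symbolic-calculus remainders, the commutators $[\langle D_{x}\rangle^{s},T_{\tilde{S}}]$, $[J_{\varepsilon},T_{\tilde{S}}]$, $[J_{\varepsilon},T_{i\tilde{A}}]$, the term $\partial_{t}\tilde{S}$ expressed through $\partial_{t}\tilde{\textbf{v}}^{\varepsilon}$, and $\mathcal{R}^{\varepsilon}$ --- is bounded by $C(\|\tilde{\textbf{v}}^{\varepsilon}\|_{H^{s}})\,\|\tilde{\textbf{v}}^{\varepsilon}\|_{H^{s}}^{2}$ uniformly in $\varepsilon$, since mollifiers are Fourier multipliers bounded by $1$, hence commute with $\textbf{P}$ and $\langle D_{x}\rangle^{s}$ and commute with paradifferential operators up to $\varepsilon$-uniformly bounded remainders. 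Therefore $\tfrac{d}{dt}E_{s}^{\varepsilon}\le\Phi(E_{s}^{\varepsilon})$ for a continuous $\Phi$ independent of $\varepsilon$; Gronwall's lemma yields a time $T>0$ and a radius $M$, both independent of $\varepsilon$, with $\|\tilde{\textbf{v}}^{\varepsilon}(t)\|_{H^{s}}\le M$ on $[0,\min(T,T_{\varepsilon}))$. In particular $\|B^{\varepsilon}(t)-\bar B\|_{L^{\infty}}\lesssim\|B^{\varepsilon}(t)-\bar B\|_{H^{s-1}}$ remains small, so $B^{\varepsilon}$ stays in a fixed compact subinterval of $(0,1)$; both blow-up alternatives are thereby excluded on $[0,T]$, and the continuation criterion forces $T_{\varepsilon}\ge T$ for every $\varepsilon>0$. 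Combined with step~(i), this proves the theorem.

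The main obstacle is precisely the $\varepsilon$-uniform estimate of step~(ii): one must check that the Lax symmetrizer $T_{\tilde{S}}$, the projector $\textbf{P}$ and the mollifier $J_{\varepsilon}$ interact only through commutators that are bounded on $H^{s}$ with norm \emph{independent of $\varepsilon$} (not merely with the $\varepsilon$-dependent bound supplied by the smoothing), and that $\tilde{S}$ is a genuine order-zero symbol in spite of the factors $1/\Delta_{1}$, $1/\sqrt{\Delta_{2}}$, $1/\Delta_{3}$ appearing in (\ref{matrix_V})--(\ref{inv}). This is exactly where the hypotheses of Proposition~\ref{control} and the restriction to a neighbourhood of the equilibrium point (Proposition~\ref{equilibrium_point_prop}) are used, since they keep $\Delta_{1},\Delta_{2},\Delta_{3}$ bounded away from $0$ on $|\xi|=1$ and hence prevent the G\aa rding equivalence constants for $E_{s}^{\varepsilon}$ from degenerating.
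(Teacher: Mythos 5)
Your proposal is correct and follows the same overall scheme as the paper: Picard--Lindel\"of on $V^{s}$ for the mollified ODE, followed by an $\varepsilon$-uniform $H^{s}$ energy estimate based on a paradifferential Lax symmetrizer built from the eigenvector matrix of $\textbf{P}\tilde{A}$, and then Gronwall plus the continuation criterion. The one place where you take a genuinely different technical route is in how the symmetrizer is quantized and its positivity established. You paraquantize the symbol $\tilde{S}=(V^{-1})^{*}V^{-1}$ directly and invoke G\aa rding's inequality for the equivalence $E_{s}^{\varepsilon}\sim\|\tilde{\textbf{v}}^{\varepsilon}\|_{H^{s}}^{2}$; the paper instead sets $W=(1-\theta_{\lambda})V^{-1}$ and defines $\Sigma:=(T_{W})^{*}T_{W}+\theta_{\lambda}^{2}(D_{x})\,Id$, so that nonnegativity of $(\Sigma\cdot,\cdot)_{0}$ is manifest without any G\aa rding-type inequality, and the lower bound $\underline{c}\|\cdot\|_{0}^{2}\le(\Sigma\cdot,\cdot)_{0}$ is proved by an explicit lemma (adapted from M\'etivier's Lemma~7.1.6): writing $W_{2}W=(1-\theta_{\lambda})Id$, the composition calculus gives $T_{W_{2}}T_{W}=(Id+R)(1-\theta_{\lambda}(D_{x}))$ with $R$ of order $-1$, and choosing $\lambda$ large absorbs the remainder. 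The paper's construction has the advantage of requiring only the paradifferential composition theorem and a mollifier-cutoff near $\xi=0$, whereas your route requires a version of G\aa rding's inequality for paradifferential operators of limited symbol smoothness (the symbol regularity here is only $W^{1,\infty}$ in $x$ through $\tilde{\textbf{v}}^{\varepsilon}$), which is available but a heavier tool. Apart from this, your treatment of the commutators $[\Lambda^{s},\textbf{P}T_{i\tilde{A}}]$, $[J_{\varepsilon},\Sigma]$, and the skew-symmetry of $\tilde{S}\,\textbf{P}\tilde{A}$ modulo order $0$, as well as the bootstrap to exclude blow-up and vacuum, matches the paper's argument.
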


\begin{proof}
First, we show that existence and uniqueness follow from the Picard theorem (see \cite{Bertozzi}). System (\ref{P_approx_compact}) can be reduced to an ordinary differential equation
\begin{equation}
\label{ODE_P}
\partial_{t}\tilde{\textbf{v}}^{\varepsilon} = F^{\varepsilon}(\tilde{\textbf{v}}^{\varepsilon}),  ~~~~~~~~~~\tilde{\textbf{v}}^{\varepsilon}(0,x)=\tilde{\textbf{v}}^{\varepsilon}_{0}(x),
\end{equation}
where 
\begin{equation}
\label{Function_ODE_P}
F^{\varepsilon}(\tilde{\textbf{v}}^{\varepsilon}) = -\sum_{j=1}^{2}\textbf{P} J_{\varepsilon}\tilde{A}_{j}(J_{\varepsilon}(\tilde{\textbf{v}^{\varepsilon}}+\bar{\textbf{v}}))\partial_{x_{j}}J_{\varepsilon}\tilde{\textbf{v}}^{\varepsilon} + \textbf{P}J_{\varepsilon}\tilde{G}(J_{\varepsilon}(\tilde{\textbf{v}}^{\varepsilon}+\bar{\textbf{v}})) =: F_{1}^{\varepsilon}(\tilde{\textbf{v}}^{\varepsilon})+F_{2}^{\varepsilon}(\tilde{\textbf{v}}^{\varepsilon}).
\end{equation}
Notice that $J_{\varepsilon}\tilde{\textbf{v}}^{\varepsilon}$ and $J_{\varepsilon}(\tilde{\textbf{v}}^{\varepsilon}+\bar{\textbf{v}})$ are $C^{\infty}$ functions and, from \cite{Metivier}, $\mathbb{P}$ is associated to an analytic pseudo-differential operator of order 0, modulo an infinitely smooth remainder, so that
$$F^{\varepsilon}: V^{s} \rightarrow V^{s}.$$ 
In order to apply the Picard theorem, we have to prove that $F^{\varepsilon}(\tilde{\textbf{v}}^{\varepsilon})$ in (\ref{Function_ODE_P}) is Lipschitz continuous. To do this, we take two vectors $\tilde{\textbf{v}}_{1}, \tilde{\textbf{v}}_{2}$ in $V^{s}$. In the following, we omit the index $\varepsilon$ in the unknown functions, where there is no ambiguity. Using Theorem C.12 in \cite{Benzoni}, it is straightforward here to prove that

\begin{equation}
\label{last_first_appr}
||F_{1}^{\varepsilon}(\tilde{\textbf{v}}_{1})-F_{1}^{\varepsilon}(\tilde{\textbf{v}}_{2})||_{s}  \le c(c_{S}, ||\tilde{\textbf{v}}_{1}||_{s}, ||\tilde{\textbf{v}}_{2}||_{s}, \bar{B}, \varepsilon^{-1}) ||\tilde{\textbf{v}}_{1}-\tilde{\textbf{v}}_{2}||_{s},
\end{equation}

where $c_{S}$ is the Sobolev embedding constant and the last inequality follows from Moser estimates and properties of mollifiers. Similarly, we have 

\begin{equation}
\label{last_first_appr1}
||F_{2}^{\varepsilon}(\tilde{\textbf{v}}_{1})-F_{2}^{\varepsilon}(\tilde{\textbf{v}}_{2})||_{s}  \le c(c_{S},||\tilde{\textbf{v}}_{1}||_{s}, ||\tilde{\textbf{v}}_{2}||_{s}, \bar{B}) ||\tilde{\textbf{v}}_{1}-\tilde{\textbf{v}}_{2}||_{s}.
\end{equation}

From (\ref{last_first_appr}) and (\ref{last_first_appr1}) we have that, for fixed $\varepsilon$, $F^{\varepsilon}$ is locally Lipschitz continuous on any open set
\begin{equation}
\mathcal{U}^{M}=\{\tilde{\textbf{v}}^{\varepsilon} \in V^{s} : ||\tilde{\textbf{v}}^{\varepsilon}||_{s} \le M \}.
\end{equation}
Then, the Picard theorem provides a unique solution $\tilde{\textbf{v}}^{ \varepsilon} \in C^{1}([0, T_{\varepsilon}), \mathcal{U}^{M})$ for any $T_{\varepsilon} > 0$.

Now, we want to show that the time of existence $T_{\varepsilon}$ is bounded from below by any strictly positive time $T$ that is independent of $\varepsilon$. 

As we pointed out in the Introduction, the main problem with our original system \eqref{Approximation} is that it is difficult to give for it a direct energy estimate, since the pressure term is not well behaved against both the symmetrizers, the classical one (\ref{A0}) or the \emph{Lax} one  (\ref{S}),  that work only on the hyperbolic part of system \eqref{Approximation}, disregarding the pressure. However, in Section 2 we noticed that system \eqref{P_approx_compact} is strongly hyperbolic near the equilibrium point $\bar{\textbf{v}}=\phi(\bar{\textbf{u}}),$ in (\ref{equilibrium_point1}), and so we can
try to find an appropriate symmetrizer for this system, which in this case is forced to be a paradifferential operator. Our construction in the following is essentially based on the techniques developed in \cite{Metivier}, which are combined to the ideas in \cite{Grenier} and adapted to our specific operator.

According to (\ref{paradifferential_system}), from (\ref{P_approx_compact}) we have
\begin{equation*}
\partial_{t}\tilde{\textbf{v}}^{\varepsilon}+\textbf{P}J_{\varepsilon}T_{i\tilde{A}(\xi, J_{\varepsilon}(\tilde{\textbf{v}}^{\varepsilon}+\bar{\textbf{v}}))}J_{\varepsilon}\tilde{\textbf{v}}^{\varepsilon}=\sum_{j=1}^{2}\textbf{P}J_{\varepsilon}[T_{\tilde{A}_{j}(J_{\varepsilon}(\tilde{\textbf{v}}^{\varepsilon}+\bar{\textbf{v}}))}-\tilde{A}_{j}(J_{\varepsilon}(\tilde{\textbf{v}}^{\varepsilon}+\bar{\textbf{v}}))]\partial_{x_{j}}J_{\varepsilon}\tilde{\textbf{v}}^{\varepsilon}
\end{equation*}

\begin{equation}
\label{paradiff_approx}
+\textbf{P}J_{\varepsilon}T_{\tilde{G}(J_{\varepsilon}(\tilde{\textbf{v}}^{\varepsilon}+\bar{\textbf{v}}))}-\textbf{P}J_{\varepsilon}[T_{\tilde{G}(J_{\varepsilon}(\tilde{\textbf{v}}^{\varepsilon}+\bar{\textbf{v}}))}-\tilde{G}(J_{\varepsilon}(\tilde{\textbf{v}}^{\varepsilon}+\bar{\textbf{v}}))].
\end{equation}
From Lemma 7.2.3 in \cite{Metivier}, properties of mollifiers and the Leray projector, we get
\begin{equation}
\label{remainder_estimate}
\begin{array}{l}
||\textbf{P}J_{\varepsilon}\{[T_{\tilde{A}_{j}(J_{\varepsilon}(\tilde{\textbf{v}}^{\varepsilon}+\bar{\textbf{v}}))} - \tilde{A}_{j}(J_{\varepsilon}(\tilde{\textbf{v}}^{\varepsilon}+\bar{\textbf{v}}))]\partial_{x_{j}}J_{\varepsilon}\tilde{\textbf{v}}^{\varepsilon}\}||_{s} 
\le c(||\tilde{\textbf{v}}^{\varepsilon}||_{s}, \bar{B}) ||\tilde{\textbf{v}}^{\varepsilon}||_{s},  ~ \text{and}\\ \\
||\textbf{P}J_{\varepsilon}\{T_{\tilde{G}(J_{\varepsilon}(\tilde{\textbf{v}}^{\varepsilon}+\bar{\textbf{v}}))}-\tilde{G}(J_{\varepsilon}(\tilde{\textbf{v}}^{\varepsilon}+\bar{\textbf{v}}))\}||_{s} \le c(||\tilde{\textbf{v}}^{\varepsilon}||_{s}, \bar{B}) ||\tilde{\textbf{v}}^{\varepsilon}||_{s}.
\end{array}
\end{equation}

Then, we can focus on the paradifferential part of (\ref{paradiff_approx}), which is
\begin{equation}
\label{pure_paradiff}
\partial_{t}\tilde{\textbf{v}}^{\varepsilon}+J_{\varepsilon}\textbf{P}T_{i\tilde{A}(\xi, J_{\varepsilon}(\tilde{\textbf{v}}^{\varepsilon}+\bar{\textbf{v}}))}J_{\varepsilon}\tilde{\textbf{v}}^{\varepsilon}-J_{\varepsilon}\textbf{P}T_{\tilde{G}(J_{\varepsilon}(\tilde{\textbf{v}}^{\varepsilon}+\bar{\textbf{v}}))}.
\end{equation}

From \eqref{PA_symbol}, we know that the symbolic matrix associated to the composition $\textbf{P}T_{i\tilde{A}(\xi, J_{\varepsilon}(\textbf{v}^{\varepsilon}+\bar{\textbf{v}}))}$ can be written as

\begin{equation}
\label{symbol_composition}
\begin{array}{c}
\textbf{P}(\xi)i\tilde{A}(\xi, J_{\varepsilon}(\tilde{\textbf{v}}^{\varepsilon}+\bar{\textbf{v}}))+R, 
\end{array}
\end{equation}

where $\textbf{P}(\xi)\tilde{A}(\xi, J_{\varepsilon}(\tilde{\textbf{v}}^{\varepsilon}+\bar{\textbf{v}}))$ is the symbolic part of degree 1, while $R$ is a remainder of order less than or equal to 0. Now, by construction 

\begin{equation}
\label{diagonalization}
\textbf{P}(\xi)\tilde{A}=V D V^{-1},
\end{equation}
with $D$ the diagonal matrix of the eigenvalues of $\textbf{P}\tilde{A}$ in (\ref{eigenvalues_PA}), namely 
$$(V^{-1})^{*}V^{-1}\textbf{P}\tilde{A}=(V^{-1})^{*} D V^{-1}$$ is symmetric. This way,
setting
\begin{equation}
\label{W}
W(\xi, J_{\varepsilon}(\tilde{\textbf{v}}^{\varepsilon}+\bar{\textbf{v}})):=(1-\theta_{\lambda}(\xi))V^{-1}(\xi, J_{\varepsilon}(\tilde{\textbf{v}}^{\varepsilon}+\bar{\textbf{v}})),
\end{equation}
with $V^{-1}(\xi, J_{\varepsilon}(\tilde{\textbf{v}}^{\varepsilon}+\bar{\textbf{v}}))$ in (\ref{inv}) and, following \emph{M\'etivier} in \cite{Metivier}, by using
\begin{equation}
\label{theta_lambda}
\theta_{\lambda}(\xi)Id=\theta(\lambda^{-1} \xi)Id,
\end{equation}
for any fixed parameter $\lambda$ and for any $\theta(\xi) \in C_{c}^{\infty}(\mathbb{R}^{2})$ such that $0 \le \theta \le 1$ for $1 < |\xi| < 2 $,  $\theta=1$ for $|\xi| \le 1$ and $\theta=0$ for $|\xi|\ge 2$, we define the regularized symmetrizer
\begin{equation}
\label{Lax_symmetrizer}
\Sigma:=({T_{W}})^{*}T_{W}+\theta_{\lambda}^{2}(D_{x})Id,
\end{equation}

where ${(T_{W})^{*}}$ is the conjugate of the paradifferential operator ${T_{{W}}}$ associated to (\ref{W}).  Thus, by construction, $\Sigma$ in (\ref{Lax_symmetrizer}) is symmetric. Moreover,
\begin{equation}
\label{symmetrizer_norm}
(\Sigma \textbf{u}, \textbf{u})_{0} = ||T_{W}\textbf{u}||_{0}^{2} + ||\theta_{\lambda}(D_{x})\textbf{u}||_{0}^{2},
\end{equation}
for every $\textbf{u} \in L^{2}(\mathbb{R}^{2})$. Now, we have to show the equivalence of (\ref{symmetrizer_norm}) with respect to the $L^{2}$-norm. We prove the following lemma, which is an adapted version of Lemma 7.1.6 in \cite{Metivier}, where we replace the square root of a more general \emph{Lax}-symmetrize with $V^{-1}$ in (\ref{inv}).

\begin{lemma}
There exist constant values $\bar{c}, \underline{c}$ such that, for every $\textbf{u} \in L^{2}(\mathbb{R}^{2})$, we have
\begin{equation}
\label{lemma}
\underline{c} ||\textbf{u}||_{0}^{2} \le (\Sigma \textbf{u}, \textbf{u})_{0} \le \bar{c} ||\textbf{u}||_{0}^{2}.
\end{equation}
\end{lemma}

\begin{proof}
Since $\Sigma$ in (\ref{Lax_symmetrizer}) is an operator of order 0, the right side follows directly from paradifferential properties (see \cite{Metivier}). We focus on the left one. 

Let $W_{1}:=(1-\theta(\xi))V^{-1}(\xi, \textbf{u})$ and $W_{2}:=(1-\theta(\xi))V(\xi, \textbf{u}).$ By construction,

$$W_{2}W_{1}=(1-\theta(\xi))^{2}Id.$$ Notice that, for $\lambda \ge 2,$ from (\ref{theta_lambda}) we have 

$$(1-\theta_{\lambda}(\xi))(1-\theta(\xi))=(1-\theta_{\lambda}(\xi)).$$
From (\ref{W}), it yields

$$(1-\theta(\xi))(1-\theta_{\lambda}(\xi))V^{-1}(\xi, \textbf{u})$$

$$=(1-\theta_{\lambda}(\xi))V^{-1}(\xi, \textbf{u})=W,$$ and

$$W_{2}W=(1-\theta(\xi))(1-\theta_{\lambda}(\xi))Id = (1-\theta_{\lambda}(\xi))Id.$$

From the composition theorem in \cite{Metivier}, we have
$$T_{W_{2}}T_{W}=(Id+R)(1-\theta_{\lambda}(D_{x})),$$
where $R$ is a remainder of order less than or equal to -1. In particular,

\begin{equation}
\label{-1norm}
||(1-\theta_{\lambda}(D_{x}))\textbf{u}||_{0} \le c(W_{2})||T_{W}\textbf{u}||_{0}+c(R)||(1-\theta_{\lambda}(D_{x}))\textbf{u}||_{H^{-1}},
\end{equation}
for every $\textbf{u} \in L^{2}(\mathbb{R}^{2})$. Now, setting $\Lambda(\xi)=(1-\Delta(\xi))^{\frac{1}{2}}$, where $\Delta(\xi)$ is the Laplace operator, 

\begin{equation}
\label{lambda2}
||(1-\theta_{\lambda}(D_{x}))\textbf{u}||_{H^{-1}}=||(1-\theta_{\lambda}(\xi))\Lambda^{-1}(\xi)\hat{\textbf{u}}||_{0}.
\end{equation}

From (\ref{theta_lambda}), $\displaystyle (1-\theta_{\lambda}(\xi))\Lambda(\xi)^{-1} = \frac{(1-\theta_{\lambda}(\xi))}{(1+|\xi|^{2})^{1/2}} \le \frac{1}{\lambda},$
and, from, (\ref{lambda2}), 

$$||(1-\theta_{\lambda}(D_{x}))\textbf{u}||_{H^{-1}} \le \frac{1}{\lambda} ||(1-\theta_{\lambda}(D_{x}))\textbf{u}||_{0}.$$

This gives
$$||(1-\theta_{\lambda}(D_{x}))\textbf{u}||_{0} \le c(W_{2})||T_{W}\textbf{u}||_{0}+ \frac{c(R)}{\lambda}||(1-\theta_{\lambda}(D_{x}))\textbf{u}||_{0},$$

then we can choose the parameter $\lambda \ge 2$ big enough such that $\frac{c(R)}{\lambda} < 1$. This way,

$$||(1-\theta_{\lambda}(D_{x}))\textbf{u}||_{0} \le c(W_{2})||T_{W}\textbf{u}||_{0}.$$

Squaring, we have

\begin{equation}
||\textbf{u}||_{0}^{2} \le c(W_{2}) ||T_{W}\textbf{u}||_{0}^{2}+||\theta_{\lambda}\textbf{u}||_{0}^{2}.
\end{equation}
\end{proof}

Now, we are ready to get energy estimates. Applying $\Lambda^{s}$ and the symmetrizer (\ref{Lax_symmetrizer}) to (\ref{paradiff_approx}), we have

\begin{equation}
\label{s_energy_estimate}
\frac{d}{dt}(\Sigma \Lambda^{s} \tilde{\textbf{v}}^{\varepsilon}, \Lambda^{s}\tilde{\textbf{v}}^{\varepsilon})_{0} = (\partial_{t}\Sigma \Lambda^{s}\tilde{\textbf{v}}^{\varepsilon}, \Lambda^{s}\tilde{\textbf{v}}^{\varepsilon})_{0} + 2 (\Sigma \Lambda^{s}\partial_{t}\tilde{\textbf{v}}^{\varepsilon}, \Lambda^{s}\tilde{\textbf{v}}^{\varepsilon})_{0}.
\end{equation}

The operator of the first term of the right-hand side,
\begin{equation}
\partial_{t}\Sigma={T_{\partial_{t}W}}^{*}T_{W}+{T_{W}}^{*}T_{\partial_{t}W},
\end{equation}
has order 0 and depends on $\partial_{t}\tilde{\textbf{v}}^{\varepsilon},$ i.e.
\begin{equation}
|(\partial_{t}\Sigma \Lambda^{s}\tilde{\textbf{v}}^{\varepsilon}, \Lambda^{s}\tilde{\textbf{v}}^{\varepsilon})_{0}| \le c(|\partial_{t}\tilde{\textbf{v}}^{\varepsilon}|_{\infty})||\tilde{\textbf{v}}^{\varepsilon}||_{s}^{2}\le c(|\tilde{\textbf{v}}^{\varepsilon}|_{\infty}, |\partial_{x_{j}}\tilde{\textbf{v}}^{\varepsilon}|_{\infty})||\tilde{\textbf{v}}^{\varepsilon}||_{s}^{2} \le c(||\tilde{\textbf{v}}^{\varepsilon}||_{s}) ||\tilde{\textbf{v}}^{\varepsilon}||_{s}^{2}
\end{equation}

where the inequalities follow from (\ref{paradiff_approx}) and the Sobolev embedding theorem. The last term of (\ref{s_energy_estimate}) yields

$$(\Sigma \Lambda^{s}\partial_{t}\tilde{\textbf{v}}^{\varepsilon}, \Lambda^{s}\tilde{\textbf{v}}^{\varepsilon})_{0}=-Re(\Sigma \Lambda^{s} \textbf{P}J_{\varepsilon}T_{i\tilde{A}(\xi, J_{\varepsilon}(\tilde{\textbf{v}}^{\varepsilon}+\bar{\textbf{v}}))}J_{\varepsilon}\tilde{\textbf{v}}^{\varepsilon}, \Lambda^{s} \tilde{\textbf{v}}^{\varepsilon})_{0}$$
$$+(\Sigma\Lambda^{s} \textbf{P}J_{\varepsilon} T_{\tilde{G}(J_{\varepsilon}(\tilde{\textbf{v}}^{\varepsilon}+\bar{\textbf{v}}))}, \Lambda^{s} \tilde{\textbf{v}}^{\varepsilon})_{0} + Q^{\varepsilon},$$

where

$$Q^{\varepsilon}=\sum_{j=1}^{2}(\Lambda^{s} \textbf{P}J_{\varepsilon}[T_{\tilde{A}_{j}(J_{\varepsilon}(\textbf{v}^{\varepsilon}+\bar{\textbf{v}}))}-\tilde{A}_{j}(J_{\varepsilon}(\textbf{v}^{\varepsilon}+\bar{\textbf{v}}))]\partial_{x_{j}}J_{\varepsilon}\textbf{v}^{\varepsilon}, \Lambda^{s}\tilde{\textbf{v}}^{\varepsilon})_{0}$$

\begin{equation}
- (\Lambda^{s}\textbf{P}J_{\varepsilon}[T_{\tilde{G}(J_{\varepsilon}(\textbf{v}^{\varepsilon}+\bar{\textbf{v}}))}-\tilde{G}(J_{\varepsilon}(\textbf{v}^{\varepsilon}+\bar{\textbf{v}}))], \Lambda^{s}\tilde{\textbf{v}}^{\varepsilon})_{0}.
\end{equation}

From (\ref{remainder_estimate}), 
$$|Q^{\varepsilon}| \le c(||\tilde{\textbf{v}}^{\varepsilon}||_{s})||\tilde{\textbf{v}}^{\varepsilon}||_{s}$$
and, from the composition theorem in \cite{Metivier},
\begin{equation}
|(\Sigma\Lambda^{s} \textbf{P}J_{\varepsilon} T_{\tilde{G}(J_{\varepsilon}(\tilde{\textbf{v}}^{\varepsilon}+\bar{\textbf{v}}))}, \Lambda^{s} \tilde{\textbf{v}}^{\varepsilon})_{0}| \le c(||\tilde{\textbf{v}}^{\varepsilon}||_{s}) ||\tilde{\textbf{v}}^{\varepsilon}||_{s}.
\end{equation}

It remains to deal with 
$$Re(\Sigma \Lambda^{s} \textbf{P}J_{\varepsilon}T_{i\tilde{A}(\xi, J_{\varepsilon}(\tilde{\textbf{v}}^{\varepsilon}+\bar{\textbf{v}}))}J_{\varepsilon}\tilde{\textbf{v}}^{\varepsilon}, \Lambda^{s} \tilde{\textbf{v}}^{\varepsilon})_{0}=Re(\Sigma J_{\varepsilon} \Lambda^{s} \textbf{P}T_{i\tilde{A}(\xi, J_{\varepsilon}(\tilde{\textbf{v}}^{\varepsilon}+\bar{\textbf{v}}))}J_{\varepsilon}\tilde{\textbf{v}}^{\varepsilon}, \Lambda^{s} \tilde{\textbf{v}}^{\varepsilon})_{0}$$

$$=Re(\Sigma \Lambda^{s}\textbf{P}T_{i\tilde{A}(\xi, J_{\varepsilon}(\tilde{\textbf{v}}^{\varepsilon}+\bar{\textbf{v}}))}J_{\varepsilon}\tilde{\textbf{v}}^{\varepsilon}, \Lambda^{s} J_{\varepsilon}\tilde{\textbf{v}}^{\varepsilon})_{0}+Re([\Sigma, J_{\varepsilon}] \Lambda^{s}\textbf{P}T_{i\tilde{A}(\xi, J_{\varepsilon}(\tilde{\textbf{v}}^{\varepsilon}+\bar{\textbf{v}}))}]J_{\varepsilon}\tilde{\textbf{v}}^{\varepsilon}, \Lambda^{s} \tilde{\textbf{v}}^{\varepsilon})_{0}$$

$$=Re(\Sigma \textbf{P}T_{i\tilde{A}(\xi, J_{\varepsilon}(\tilde{\textbf{v}}^{\varepsilon}+\bar{\textbf{v}}))}\Lambda^{s}J_{\varepsilon}\tilde{\textbf{v}}^{\varepsilon}, \Lambda^{s} J_{\varepsilon}\tilde{\textbf{v}}^{\varepsilon})_{0}+
Re([\Sigma, J_{\varepsilon}]\textbf{P}T_{i\tilde{A}(\xi, J_{\varepsilon}(\tilde{\textbf{v}}^{\varepsilon}+\bar{\textbf{v}}))}\Lambda^{s}J_{\varepsilon}\tilde{\textbf{v}}^{\varepsilon}, \Lambda^{s} \tilde{\textbf{v}}^{\varepsilon})_{0}$$

\begin{equation}
\label{last}
+Re(\Sigma[\Lambda^{s}, \textbf{P}T_{i\tilde{A}(\xi, J_{\varepsilon}(\tilde{\textbf{v}}^{\varepsilon}+\bar{\textbf{v}}))}]J_{\varepsilon}\tilde{\textbf{v}}^{\varepsilon}, \Lambda^{s} J_{\varepsilon}\tilde{\textbf{v}}^{\varepsilon})_{0}.
\end{equation}

In the last term of the expression above, from Proposition 1.10 in \cite{Grenier} the symbol of the commutator $[\Lambda^{s},\textbf{P}T_{i\tilde{A}(\xi, J_{\varepsilon}(\tilde{\textbf{v}}^{\varepsilon}+\bar{\textbf{v}}))}]$ is given by
$$\sum_{|\alpha| \ge 0} \partial^{\alpha}_{\xi}\Lambda^{s} D^{\alpha}_{x}(\sum_{|\beta| \ge 0}\partial^{\beta}_{\xi}\textbf{P}iD^{\beta}_{x}\tilde{A})-D^{\alpha}_{\xi}(\sum_{|\beta| \ge 0}\partial^{\beta}_{\xi}\textbf{P}iD^{\beta}_{x}\tilde{A})D^{\alpha}_{x}\Lambda^{s},$$

where $\sum_{|\beta| \ge 0}\partial^{\beta}_{\xi}(\textbf{P}iD^{\beta}_{x}\tilde{A})D^{\alpha}_{x}$ is the symbol of the composition $\textbf{P}T_{i\tilde{A}}$. Since $\Lambda^{s}(\xi)$ only depends on the parameter $\xi,$ the sum can be written as

$$\Lambda^{s}(\sum_{|\beta| \ge 0}\partial^{\beta}_{\xi}\textbf{P}iD^{\beta}_{x}\tilde{A})-(\sum_{|\beta| \ge 0}\partial^{\beta}_{\xi}\textbf{P}iD^{\beta}_{x}\tilde{A})\Lambda^{s} + \sum_{|\alpha| > 0} \partial^{\alpha}_{\xi}\Lambda^{s} D^{\alpha}_{x}(\sum_{|\beta| \ge 0}\partial^{\beta}_{\xi}\textbf{P}iD^{\beta}_{x}\tilde{A}).$$

Now, since $\Lambda^{s}=(1+|\xi|^{2})^{\frac{s}{2}}Id$, then
$$\Lambda^{s}(\sum_{|\beta| \ge 0}\partial^{\beta}_{\xi}\textbf{P}iD^{\beta}_{x}\tilde{A})-(\sum_{|\beta| \ge 0}\partial^{\beta}_{\xi}\textbf{P}iD^{\beta}_{x}\tilde{A})\Lambda^{s}=0,$$

namely the commutator $[\Lambda^{s}, \textbf{P}T_{i\tilde{A}(J_{\varepsilon}(\tilde{\textbf{v}}^{\varepsilon}+\bar{\textbf{v}}))}]$ has order less than or equal to $s,$ and 
$$|(\Sigma[\Lambda^{s}, \textbf{P}T_{i\tilde{A}(\xi, J_{\varepsilon}(\tilde{\textbf{v}}^{\varepsilon}+\bar{\textbf{v}}))}]\tilde{\textbf{v}}^{\varepsilon}, \Lambda^{s}\tilde{\textbf{v}}^{\varepsilon})_{0}| \le c(||\tilde{\textbf{v}}^{\varepsilon}||_{s})||\tilde{\textbf{v}}^{\varepsilon}||_{s}^{2}.$$

Moreover,
$$[\Sigma, J_{\varepsilon}]=\Sigma J_{\varepsilon}-J_{\varepsilon}\Sigma+\sum_{|\alpha|>0}D^{\alpha}_{\xi}J_{\varepsilon}D^{\alpha}_{x}\Sigma,$$

Since $J_{\varepsilon}=j_{\varepsilon}(\xi) Id$ and $\Sigma$ is symmetric,  $\Sigma J_{\varepsilon}-J_{\varepsilon}\Sigma=0$. Then, the commutator $[\Sigma, J_{\varepsilon}]$ has order less than or equal to $-1$ and, since $\textbf{P} T_{i\tilde{A}(\xi, J_{\varepsilon}(\tilde{\textbf{v}}^{\varepsilon}+\bar{\textbf{v}}))}$ has order $1,$ from the composition theorem in \cite{Metivier} we have
$$([\Sigma, J_{\varepsilon}]\textbf{P}T_{i\tilde{A}(\xi, J_{\varepsilon}(\tilde{\textbf{v}}^{\varepsilon}+\bar{\textbf{v}}))}\Lambda^{s}\tilde{\textbf{v}}^{\varepsilon}, \Lambda^{s}\tilde{\textbf{v}}^{\varepsilon})_{0} \le c(||\tilde{\textbf{v}}^{\varepsilon}||_{s})||\tilde{\textbf{v}}^{\varepsilon}||_{s}^{2}.$$

It remains to consider the last term of (\ref{last}). From (\ref{Lax_symmetrizer}), $\Sigma=(T_{W})^{*}T_{W}+\theta_{\lambda}^{2}(D_{x}),$ and, from Proposition 1.10 in \cite{Grenier}, the only symbol of degree 1 in the expansion of $\Sigma\textbf{P}T_{i\tilde{A}(\xi, J_{\varepsilon}(\tilde{\textbf{v}}^{\varepsilon}+\bar{\textbf{v}}))}$ is given by $$(V^{-1})^{*}V^{-1}\textbf{P}i\tilde{A}(\xi, J_{\varepsilon}(\tilde{\textbf{v}}^{\varepsilon}+\bar{\textbf{v}}))+\theta_{\lambda}^{2}(\xi)\textbf{P}i\tilde{A}(\xi, J_{\varepsilon}(\tilde{\textbf{v}}^{\varepsilon}+\bar{\textbf{v}})).$$ By construction, from (\ref{matrix_V}) and (\ref{inv}), we have

$$\textbf{P}i\tilde{A}(\xi, J_{\varepsilon}(\tilde{\textbf{v}}^{\varepsilon}+\bar{\textbf{v}}))=ViDV^{-1}(\xi, J_{\varepsilon}(\tilde{\textbf{v}}^{\varepsilon}+\bar{\textbf{v}})),$$
where $D(\xi, J_{\varepsilon}(\tilde{\textbf{v}}^{\varepsilon}+\bar{\textbf{v}}))$ is a diagonal matrix of real terms as in (\ref{diagonalization}), and
$$(V^{-1})^{*}V^{-1}\textbf{P}i\tilde{A}(\xi, J_{\varepsilon}(\tilde{\textbf{v}}^{\varepsilon}+\bar{\textbf{v}}))=(V^{-1})^{*}iDV^{-1}(\xi, J_{\varepsilon}(\tilde{\textbf{v}}^{\varepsilon}+\bar{\textbf{v}})).$$

We define
$$N:=(V^{-1})^{*}iDV^{-1}(\xi, J_{\varepsilon}(\tilde{\textbf{v}}^{\varepsilon}+\bar{\textbf{v}})).$$
Then
$$N+N^{*}=0,$$
namely
$$Re(i(V^{-1})^{*}V^{-1}\textbf{P}\tilde{A}(\xi, J_{\varepsilon}(\tilde{\textbf{v}}^{\varepsilon}+\bar{\textbf{v}}))=N+N^{*}=0.$$

The second addend of the symbolic symmetrizer (\ref{Lax_symmetrizer}) gives
\begin{equation}
|Re(i\theta_{\lambda}^{2}(D_{x})\textbf{P}\tilde{A}(\xi, J_{\varepsilon}(\tilde{\textbf{v}}^{\varepsilon}+\bar{\textbf{v}}))\Lambda^{s} \tilde{\textbf{v}}^{\varepsilon}, \Lambda^{s} \tilde{\textbf{v}}^{\varepsilon})_{0}| \le ||\theta_{\lambda}(D_{x})\textbf{P}\tilde{A}(\xi, J_{\varepsilon}(\tilde{\textbf{v}}^{\varepsilon}+\bar{\textbf{v}}))\Lambda^{s} \tilde{\textbf{v}}^{\varepsilon}||_{0} ||\Lambda^{s} \tilde{\textbf{v}}^{\varepsilon}||_{0}.
\end{equation}

From (\ref{theta_lambda}), we get
\begin{equation*}
||\theta_{\lambda}(D_{x})\textbf{P}\tilde{A}(\xi, J_{\varepsilon}(\tilde{\textbf{v}}^{\varepsilon}+\bar{\textbf{v}}))\Lambda^{s} \tilde{\textbf{v}}^{\varepsilon}||_{0} \le \sqrt{1+4 \lambda^{2}} ||\theta_{\lambda}(D_{x})\textbf{P}\tilde{A}(\xi, J_{\varepsilon}(\tilde{\textbf{v}}^{\varepsilon}+\bar{\textbf{v}}))\Lambda^{s} \tilde{\textbf{v}}^{\varepsilon}||_{H^{-1}}
\end{equation*}

\begin{equation}
\le 3 \lambda ||\textbf{P}\tilde{A}(\xi, J_{\varepsilon}(\tilde{\textbf{v}}^{\varepsilon}+\bar{\textbf{v}}))\Lambda^{s} \tilde{\textbf{v}}^{\varepsilon}||_{H^{-1}}
\le c(||\tilde{\textbf{v}}^{\varepsilon}||_{s})||\tilde{\textbf{v}}^{\varepsilon}||_{s}.
\end{equation}

This way, 
$$|Re(\Sigma\textbf{P}T_{i\tilde{A}(J_{\varepsilon}(\tilde{\textbf{v}}^{\varepsilon}+\bar{\textbf{v}}))}\Lambda^{s}J_{\varepsilon}\tilde{\textbf{v}}^{\varepsilon}, \Lambda^{s}J_{\varepsilon} \tilde{\textbf{v}}^{\varepsilon})_{0}| \le c(||\tilde{\textbf{v}}^{\varepsilon}||_{s})||\tilde{\textbf{v}}^{\varepsilon}||_{s}^{2},$$

and, putting it all together, we have

\begin{equation}
\label{Gronwall}
\frac{d}{dt}(\Sigma\Lambda^{s} \tilde{\textbf{v}}^{\varepsilon}, \Lambda^{s} \tilde{\textbf{v}}^{\varepsilon})_{0} \le c(||\tilde{\textbf{v}}^{\varepsilon}||_{s})||\tilde{\textbf{v}}^{\varepsilon}||_{s}^{2}.
\end{equation}

Let $T_{\varepsilon}$ be the maximum time of existence of the solution to system (\ref{Approximation}). We want to show that there exists a time $T>0$, which is independent of $\varepsilon$, such that $T \le T_{\varepsilon}$ for every $\varepsilon>0$. From Theorem \ref{Local Existence of Approximating Solutions_P}, there exists a constant $M$ such that $\displaystyle||\tilde{\textbf{u}}^{\varepsilon}_{0}||_{s} \le M$. Fixed a constant value $\tilde{M}>M,$ let $T_{0}^{\varepsilon} \le T_{\varepsilon}$ be a positive time such that the smooth solution $\tilde{\textbf{v}}^{\varepsilon}$ verifies

\begin{equation}
\label{uniform_estimate}
sup_{~ 0 \le \tau \le T_{0}^{\varepsilon} ~} ||\tilde{\textbf{v}}^{\varepsilon}(\tau)||_{s} \le \tilde{M}.
\end{equation}

From (\ref{Gronwall}), we get
\begin{equation}
\label{prebound1}
||\tilde{\textbf{v}}^{\varepsilon}(t)||_{s} \le ||\tilde{\textbf{v}}^{\varepsilon}_{0}||_{s} e^{c(\tilde{M})t}
\end{equation}

for $t \in [0, T_{0}^{\varepsilon}]$. Let $T,$ with $0 < T \le T_{0}^{\varepsilon},$ be such that

\begin{equation}
M e^{c(\tilde{M})T} \le \tilde{M}.
\end{equation}

This yields

\begin{equation}
\label{time_bound1}
T \le \frac{log(\frac{\tilde{M}}{M})}{c(\tilde{M})}.
\end{equation}

Since $M, \tilde{M}$ are independent of the parameter $\varepsilon$, estimate (\ref{time_bound1}) implies that the time $T$ is independent of $\varepsilon$ and $(\tilde{\textbf{v}}^{\varepsilon})_{\varepsilon \ge 0}$ is uniformly bounded provided that $\displaystyle T \le \frac{log(\frac{\tilde{M}}{M})}{c(\tilde{M})}$.
\end{proof}

\subsection{Uniqueness}
\begin{theorem}\label{unique}
There is a unique  solution $\tilde{\textbf{v}}$ to problem (\ref{BL_compact_translated}) in the space $ Lip([0, T], Lip(\mathbb{R}^{2}) | \nabla \cdot w=0) \cap L^{\infty}([0,T], V^{0})$.
\end{theorem}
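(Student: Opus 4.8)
The plan is to prove a stability estimate in $V^{0}$ for the difference of two solutions, working with the \emph{projected} equation so that the incompressible pressure disappears, and with the paradifferential \emph{Lax} symmetrizer of Section 3 evaluated now at a merely Lipschitz state. Let $(\tilde{\textbf{v}}_{1},P_{1})$ and $(\tilde{\textbf{v}}_{2},P_{2})$ be two solutions in the stated class with the same initial datum, put $\textbf{v}_{i}=\tilde{\textbf{v}}_{i}+\bar{\textbf{v}}$, $\delta\textbf{v}=\textbf{v}_{1}-\textbf{v}_{2}$, $\delta P=P_{1}-P_{2}$. Subtracting the two copies of (\ref{BL_compact_translated}) and applying $\textbf{P}$ in (\ref{P_operator}) — which kills $(0,\nabla\delta P,\textbf{0})$ and leaves $\delta\textbf{v}$ unchanged because $\nabla\cdot\delta w=0$ — one gets
\begin{equation*}
\partial_{t}\delta\textbf{v}+\sum_{j=1}^{2}\textbf{P}\tilde{A}_{j}(\textbf{v}_{1})\partial_{x_{j}}\delta\textbf{v}=\textbf{P}\Big(\tilde{G}(\textbf{v}_{1})-\tilde{G}(\textbf{v}_{2})-\sum_{j=1}^{2}\big(\tilde{A}_{j}(\textbf{v}_{1})-\tilde{A}_{j}(\textbf{v}_{2})\big)\partial_{x_{j}}\textbf{v}_{2}\Big).
\end{equation*}
The two terms on the right are $O(\|\delta\textbf{v}\|_{0})$ in $L^{2}$: $\tilde{A}_{j}$ and $\tilde{G}$ are smooth, hence locally Lipschitz, functions of $\textbf{v}$ on the compact region $\{0<B<1\}$ (near $\bar{\textbf{v}}$) where both solutions take values, and $\partial_{x_{j}}\textbf{v}_{2}\in L^{\infty}$ since $\tilde{\textbf{v}}_{2}(t,\cdot)\in\mathrm{Lip}(\mathbb{R}^{2})$.

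Next I would paralinearize the quasi-linear term, $\tilde{A}_{j}(\textbf{v}_{1})\partial_{x_{j}}\delta\textbf{v}=T_{\tilde{A}_{j}(\textbf{v}_{1})}\partial_{x_{j}}\delta\textbf{v}+r_{j}$ with $\|r_{j}\|_{0}\le c(\|\textbf{v}_{1}\|_{\mathrm{Lip}})\|\delta\textbf{v}\|_{0}$ — the same kind of estimate as in (\ref{remainder_estimate}), which is valid with Lipschitz coefficients by Lemma 7.2.3 in \cite{Metivier} — and then pair the resulting projected paradifferential equation with the symmetrizer obtained from (\ref{Lax_symmetrizer}) by replacing the mollified argument with $\textbf{v}_{1}$, namely $\Sigma_{1}=(T_{W_{1}})^{*}T_{W_{1}}+\theta_{\lambda}^{2}(D_{x})$ with $W_{1}=(1-\theta_{\lambda}(\xi))V^{-1}(\xi,\textbf{v}_{1})$ and $V^{-1}$ as in (\ref{inv}). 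Because the structural conditions (\ref{assumptions}) hold along both solutions (Proposition \ref{equilibrium_point_prop}), $V$ and $V^{-1}$ are bounded symbols with Lipschitz dependence on $x$, so $\Sigma_{1}$ is a well-defined self-adjoint operator of order $0$ and the two-sided bound (\ref{lemma}) holds verbatim with $\textbf{u}=\delta\textbf{v}$ (its proof uses only the paradifferential composition theorem and the boundedness of $V,V^{-1}$). Estimating $\frac{d}{dt}(\Sigma_{1}\delta\textbf{v},\delta\textbf{v})_{0}$ as in the proof of Theorem \ref{Local Existence of Approximating Solutions_P} but with $s=0$: the $\partial_{t}\Sigma_{1}$ contribution is an order-$0$ operator whose symbol depends on $\partial_{t}\textbf{v}_{1}\in L^{\infty}$, hence $O(\|\delta\textbf{v}\|_{0}^{2})$; in the main term $\mathrm{Re}(\Sigma_{1}\textbf{P}T_{i\tilde{A}(\xi,\textbf{v}_{1})}\delta\textbf{v},\delta\textbf{v})_{0}$ the only symbol of degree $1$ is, by (\ref{diagonalization}), $(V_{1}^{-1})^{*}iD_{1}V_{1}^{-1}$, which is skew-Hermitian since $D_{1}$ is real and diagonal, so its real part vanishes, while all the remaining symbols have order $\le 0$ with norms controlled by $\|\textbf{v}_{1}\|_{\mathrm{Lip}}$, and there is no $[\Sigma,J_{\varepsilon}]$-type correction because no mollifier is present. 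Collecting the bounds and using (\ref{lemma}) gives, exactly as in (\ref{Gronwall}),
\begin{equation*}
\frac{d}{dt}(\Sigma_{1}\delta\textbf{v},\delta\textbf{v})_{0}\le c\big(\|\textbf{v}_{1}\|_{\mathrm{Lip}},\|\textbf{v}_{2}\|_{\mathrm{Lip}}\big)\,(\Sigma_{1}\delta\textbf{v},\delta\textbf{v})_{0}.
\end{equation*}
Since $\delta\textbf{v}(0)=0$, Gr\"onwall's lemma forces $(\Sigma_{1}\delta\textbf{v},\delta\textbf{v})_{0}\equiv 0$, hence $\delta\textbf{v}\equiv 0$ by (\ref{lemma}); plugging $\delta\textbf{v}\equiv 0$ back into the difference equation forces $\nabla\delta P=0$, so $\delta P$ is constant and the pair is unique.

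The hard part is deliberately \emph{not} the pressure: projecting first removes $\nabla\delta P$, and the obstruction discussed in Section 2 — that the commutator of a symmetrizer with $\textbf{P}$ applied to $\tilde{F}_{P}$ is a genuine first-order symbol and that $P$ loses a derivative through (\ref{pressure_elliptic_equation1}) — never enters, since for the difference we only have to symmetrize the projected operator $\textbf{P}T_{i\tilde{A}}$, which Section 2 has already done. (This is also why the classical symmetrizer (\ref{classical_symm_v}) cannot be used directly on the non-projected difference equation: its off-diagonal $-z$ entries leave a pressure term coupling $\nabla\delta P$ to $\delta B$ that cannot be closed at the $L^{2}$ level for merely Lipschitz solutions.) The genuine difficulty is that the reference state $\textbf{v}_{1}$ is only Lipschitz in space and time, so the symmetrizer $\Sigma_{1}$, the paralinearization remainders $r_{j}$, and the order-$\le 0$ remainders in the symbolic expansion of $\Sigma_{1}\textbf{P}T_{i\tilde{A}}$ all have to be estimated at the threshold regularity $H^{0}$ using nothing more than the $W^{1,\infty}$-in-space and $L^{\infty}$-in-time bounds guaranteed by the solution class — that is, one relies precisely on the robustness of paradifferential calculus with respect to rough coefficients (the $L^{2}$ well-posedness theory in \cite{Metivier}) rather than on any Sobolev embedding available only for $s>2$.
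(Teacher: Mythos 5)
Your argument is correct and essentially reproduces the paper's proof: project to annihilate $\nabla\delta P$, apply the Lax paradifferential symmetrizer $\Sigma$ frozen at one of the two solutions (the paper uses $\tilde{\textbf{v}}_{2}+\bar{\textbf{v}}$ where you use $\textbf{v}_{1}$, a cosmetic choice), exploit the cancellation of the principal skew-Hermitian symbol, control the remaining terms in $L^{2}$ by the Lipschitz norms of the reference solutions, and conclude by Gr\"onwall. Your writeup actually makes explicit several steps the paper leaves implicit — the paralinearization remainder at $s=0$, the validity of the norm-equivalence lemma for a merely Lipschitz symbol, and the recovery of $\nabla\delta P=0$ — so it is a faithful, slightly more detailed version of the same argument.
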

\begin{proof}
According to Definition \ref{classical_solutions}, let $\tilde{\textbf{v}}_{1}, \tilde{\textbf{v}}_{2}$ be two solutions to system (\ref{classical_solutions}), with the respective pressure terms $P_{1}, P_{2}$ and the same initial data $\tilde{\textbf{v}}_{1}(0,x)=\tilde{\textbf{v}}_{2}(0,x)=\tilde{\textbf{v}}_{0}$. From (\ref{BL_compact_translated}) and (\ref{P_operator}), we have

\begin{equation*}
\Sigma(\tilde{\textbf{v}}_{2}+\bar{\textbf{v}})\partial_{t}(\tilde{\textbf{v}}_{2}-\tilde{\textbf{v}}_{1})+\sum_{j=1}^{2} \Sigma(\tilde{\textbf{v}}_{2}+\bar{\textbf{v}})\textbf{P}T_{\tilde{A}_{j}(\tilde{\textbf{v}}_{2}+\bar{\textbf{v}})}\partial_{x_{j}}(\tilde{\textbf{v}}_{2}-\tilde{\textbf{v}}_{1})
\end{equation*}

\begin{equation*}
+\Sigma(\tilde{\textbf{v}}_{2}+\bar{\textbf{v}})\sum_{j=1}^{2}[\textbf{P}\tilde{A}_{j}(\tilde{\textbf{v}}_{2}+\bar{\textbf{v}})-\textbf{P}T_{\tilde{A}_{j}(\tilde{\textbf{v}}_{2}+\bar{\textbf{v}})}]\partial_{x_{j}}(\tilde{\textbf{v}}_{2}-\tilde{\textbf{v}}_{1})
\end{equation*}

\begin{equation*}
=\Sigma(\tilde{\textbf{v}}_{2}+\bar{\textbf{v}})\tilde{G}(\tilde{\textbf{v}}_{2}+\bar{\textbf{v}}) - \Sigma(\tilde{\textbf{v}}_{1}+\bar{\textbf{v}})\tilde{G}(\tilde{\textbf{v}}_{1}+\bar{\textbf{v}})
\end{equation*}

\begin{equation*}
+[\Sigma(\tilde{\textbf{v}}_{1}+\bar{\textbf{v}})-\Sigma(\tilde{\textbf{v}}_{2}+\bar{\textbf{v}})]\partial_{t}\tilde{\textbf{v}}_{1}+\sum_{j=1}^{2}[\Sigma(\tilde{\textbf{v}}_{1}+\bar{\textbf{v}})\textbf{P}T_{\tilde{A}_{j}(\tilde{\textbf{v}}_{1}+\bar{\textbf{v}})} - \Sigma(\tilde{\textbf{v}}_{2}+\bar{\textbf{v}})\textbf{P}T_{\tilde{A}_{j}(\tilde{\textbf{v}}_{2}+\bar{\textbf{v}})}] \partial_{x_{j}}\tilde{\textbf{v}}_{1}
\end{equation*}

\begin{equation*}
+ \sum_{j=1}^{2}[\Sigma(\tilde{\textbf{v}}_{1}+\bar{\textbf{v}})\textbf{P}{\tilde{A}_{j}(\tilde{\textbf{v}}_{1}+\bar{\textbf{v}})} - \Sigma(\tilde{\textbf{v}}_{2}+\bar{\textbf{v}})\textbf{P}{\tilde{A}_{j}(\tilde{\textbf{v}}_{2}+\bar{\textbf{v}})}] \partial_{x_{j}}\tilde{\textbf{v}}_{1}
\end{equation*}

\begin{equation}
+\sum_{j=1}^{2}[\Sigma(\tilde{\textbf{v}}_{2}+\bar{\textbf{v}})\textbf{P}T_{\tilde{A}_{j}(\tilde{\textbf{v}}_{2}+\bar{\textbf{v}})}-\Sigma(\tilde{\textbf{v}}_{1}+\bar{\textbf{v}})\textbf{P}T_{\tilde{A}_{j}(\tilde{\textbf{v}}_{1}+\bar{\textbf{v}})}]\partial_{x_{j}}\tilde{\textbf{v}}_{1}.
\end{equation}

As done in Section 3, this provides the following estimate:

\begin{equation}
\label{uniqueness}
\frac{d}{dt}(\Sigma \tilde{\textbf{v}}_{1}-\tilde{\textbf{v}}_{2},  \tilde{\textbf{v}}_{1}-\tilde{\textbf{v}}_{2})_{0} \le c|| \tilde{\textbf{v}}_{1}-\tilde{\textbf{v}}_{2}||_{0}^{2},
\end{equation}
namely $\tilde{\textbf{v}}_{1}=\tilde{\textbf{v}}_{2}=0,$ since $\tilde{\textbf{v}}_{1}-\tilde{\textbf{v}}_{2}(0,x)=\tilde{\textbf{v}}_{1}(0,x)-\tilde{\textbf{v}}_{2}(0,x)=0,$
where the constant value $c$ in (\ref{uniqueness}) only depends on $|\tilde{\textbf{v}}|_{\infty}$, $|\partial_{t}\tilde{\textbf{v}}|_{\infty}$ and $|\nabla \tilde{\textbf{v}}|_{\infty}$. 
\end{proof}

\section{The original biofilms system:  a multi-solid-phases model}
We consider system (\ref{BDEL}), which can be written as

\begin{equation}
\label{BDEL_complete}
\begin{cases}
& \partial_{t}B + \nabla \cdot (B v_{S}) = \Gamma_{B}:=k_{B}BL-k_{D}B, \\
& \partial_{t}D+ \nabla \cdot (D v_{S}) = \Gamma_{D}:=\alpha Bk_{D}-k_{N}D, \\
& \partial_{t}E+ \nabla \cdot (E v_{S}) = \Gamma_{E}:=BLk_{E}-\varepsilon E, \\
& \partial_{t}{v}_{S}+{v}_{S} \cdot \nabla {v}_{S} +\frac{\gamma \nabla (B+D+E)}{ (B+D+E)}+ \nabla {P}=\Gamma_{v_{S}}:=\frac{(M+\Gamma_{B}+\Gamma_{D}+\Gamma_{E})(v_{L}-v_{S})}{B+D+E}, \\
& \partial_{t}{v}_{L}+{v}_{L} \cdot \nabla {v}_{L} + \nabla {P}=\Gamma_{v_{L}}:=\frac{M(v_{S}-v_{L})}{1-(B+D+E)}, \\
& \nabla \cdot ((B+D+E)v_{S} + (1-(B+D+E))v_{L}) = 0, \\
& B+D+E+L=1,\\
\end{cases}
\end{equation}

where $k_{B}, k_{D}, k_{E}, k_{N},\alpha, \varepsilon$ are experimental constants. Now, $\textbf{P}\tilde{A}(\xi, J_{\varepsilon}(\tilde{\textbf{v}}^{\varepsilon}+\bar{\textbf{v}}))$ has the following eigenvalues:

\begin{equation}
\label{BDEL_eigenvalues}
\begin{array}{l}
\lambda_{1}=0,\\ 
\lambda_{2}=(w-Bz)\cdot \xi,\\ 
\lambda_{3}=\lambda_{4}=\lambda_{5}=(w+(1-B)z) \cdot \xi, \\
\lambda_{6/7}=(w+(1-B-\nu))\cdot \xi \pm \sqrt{(1-\nu)(\gamma |\xi|^{2}-\nu(z \cdot \xi)^{2})}. \\
\end{array}
\end{equation}

Besides, the eigenvectors are the columns of $V(\xi, J_{\varepsilon}(\tilde{\textbf{v}}^{\varepsilon}+\bar{\textbf{v}}))$
\begin{equation}
\label{V_BDEL}
\left(\begin{array}{ccccccc}
\frac{q_{11}}{|\xi|\Delta_{1}} & 0 & 1 & -1& 0 & \frac{B|\xi|\Delta_{4}}{\Delta_{2}} & \frac{-B|\xi|\Delta_{4}}{\Delta_{2}} \\
\frac{q_{21}}{|\xi|\Delta_{1}} & 0 & -1 & 0 & 0 & \frac{D|\xi|\Delta_{4}}{\Delta_{2}} & \frac{-D|\xi|\Delta_{4}}{\Delta_{2}} \\
\frac{q_{31}}{|\xi|\Delta_{1}} & 0 & 0 & 1 & 0 &  \frac{E|\xi|\Delta_{4}}{\Delta_{2}} & \frac{-E|\xi|\Delta_{4}}{\Delta_{2}} \\
\frac{q_{41}}{|\xi|\Delta_{1}} & \frac{(1-\nu)\xi_{2}}{|\xi|} & 0 & 0 & \frac{-\nu\xi_{2}}{|\xi|} & \frac{-\xi_{2} \Delta_{5}}{|\xi|\Delta_{2}} & \frac{\xi_{2} \Delta_{5}}{|\xi|\Delta_{2}} \\
\frac{q_{51}}{|\xi|\Delta_{1}} & \frac{-(1-\nu)\xi_{1}}{|\xi|} & 0 & 0 & \frac{\nu\xi_{1}}{|\xi|} & \frac{\xi_{1} \Delta_{5}}{|\xi|\Delta_{2}} & \frac{-\xi_{1} \Delta_{5}}{|\xi|\Delta_{2}} \\
\frac{\xi_{1}}{|\xi|} & \frac{-\xi_{2}}{|\xi|} & 0 & 0 & \frac{-\xi_{2}}{|\xi|} & \frac{\xi_{1}}{|\xi|} & \frac{\xi_{1}}{|\xi|} \\
\frac{\xi_{2}}{|\xi|} & \frac{\xi_{1}}{|\xi|} & 0 & 0 & \frac{\xi_{1}}{|\xi|} & \frac{\xi_{2}}{|\xi|} & \frac{\xi_{2}}{|\xi|} \\
\end{array}\right),
\end{equation}

where $$q_{11}=q_{11}(\xi, J_{\varepsilon}\tilde{\textbf{v}}^{\varepsilon}, \bar{\textbf{v}}), q_{21}=q_{21}(\xi, J_{\varepsilon}\tilde{\textbf{v}}^{\varepsilon}, \bar{\textbf{v}}), q_{31}=q_{31}(\xi, J_{\varepsilon}\tilde{\textbf{v}}^{\varepsilon}, \bar{\textbf{v}}), q_{41}=q_{41}(\xi, J_{\varepsilon}\tilde{\textbf{v}}^{\varepsilon}, \bar{\textbf{v}}),$$
$$q_{51}=q_{51}(\xi, J_{\varepsilon}\tilde{\textbf{v}}^{\varepsilon}, \bar{\textbf{v}})$$ are polynomial functions of degree 3 in the $\xi$ variable, $\Delta_{1}$ in (\ref{Delta}), and
\begin{equation}
\label{Deltas}
\begin{cases}
\nu:=B+D+E, \\
\Delta_{2}:=\gamma |\xi|^{2} - (B+D+E)(z \cdot \xi)^{2}, \\
\Delta_{4}:=\sqrt{(1-\nu)\Delta_{2}},\\
\Delta_{5}:={(\xi_{1}z_{2}-\xi_{2}z_{1})\nu\Delta_{4}},\\
\end{cases}
\end{equation}

while $V^{-1}(\xi, J_{\varepsilon}(\tilde{\textbf{v}}^{\varepsilon}+\bar{\textbf{v}}))$
\begin{equation}
\label{inverse}
\left(\begin{array}{ccccccc}
0 & 0 & 0 & \frac{-\xi_{1}}{|\xi| \sqrt{1-\nu}} & \frac{-\xi_{2}}{|\xi| \sqrt{1-\nu}} & 0 & 0 \\
0 & 0 & 0 & \frac{\xi_{2}}{|\xi|} & \frac{-\xi_{1}}{|\xi|} & \frac{-\xi_{2}\nu}{|\xi|} & \frac{\xi_{1}\nu}{|\xi|} \\
\frac{-D}{\nu} & \frac{B+E}{\nu} & \frac{-D}{\nu} & 0 & 0 & 0 & 0 \\
\frac{-E}{\nu} & \frac{-E}{\nu}  & \frac{B+D}{\nu} & 0 & 0 & 0 & 0 \\
0 & 0 & 0 & \frac{-\xi_{2}}{|\xi|} & \frac{\xi_{1}}{|\xi|} & \frac{-(1-\nu)\xi_{2}}{|\xi|} & \frac{(1-\nu)\xi_{1}}{|\xi|} \\
\frac{\sqrt{\gamma}}{2\nu\sqrt{1-\nu}} & \frac{\sqrt{\gamma}}{2\nu\sqrt{1-\nu}} & \frac{\sqrt{\gamma}}{2\nu\sqrt{1-\nu}} & \frac{\xi_{1}}{2 |\xi| (1-\nu)} & \frac{\xi_{2}}{2|\xi|(1-\nu)} & \frac{\xi_{1}}{2|\xi|} & \frac{\xi_{2}}{2|\xi|} \\
\frac{-\sqrt{\gamma}}{2\nu\sqrt{1-\nu}} & \frac{-\sqrt{\gamma}}{2\nu\sqrt{1-\nu}} & \frac{-\sqrt{\gamma}}{2\nu\sqrt{1-\nu}} & \frac{\xi_{1}}{2 |\xi| (1-\nu)} & \frac{\xi_{2}}{2|\xi|(1-\nu)} & \frac{\xi_{1}}{2|\xi|} & \frac{\xi_{2}}{2|\xi|} \\
\end{array}\right).
\end{equation}

Since $V$ and $V^{-1}$ are bounded for each $\xi \in \mathbb{R}^{2}-\{\textbf{0}\},$ we can apply the arguments developed for system (\ref{BL_System}) to the complete case (\ref{BDEL_complete}).

\section{The three dimensional two phases model}
The three dimensional case contains structural difficulties that we are not able to solve. In three space dimensions
\begin{equation}
\label{PA_symbol_3d}
\textbf{P}{\tilde{A}}(\xi,\textbf{v})=\textbf{P}(\xi)(\tilde{A}_{1}(\textbf{v})\xi_{1}+\tilde{A}_{2}(\textbf{v})\xi_{2}+\tilde{A}_{3}(\textbf{v})\xi_{3}),
\end{equation}

with the following eigenvalues:
\begin{equation}
\label{eigenvalues_3d}
\begin{array}{l}
\lambda_{1}=0, \\
\lambda_{2}=\lambda_{3}=(w-Bz) \cdot \xi, \\
\lambda_{4}=\lambda_{5}=(w+(1-B)z) \cdot \xi, \\
\lambda_{6/7}=(w+(1-2B)z)\cdot \xi \pm \sqrt{(1-B) \Delta_{2}}, \\
\end{array}
\end{equation}

where $\Delta_{2}=\gamma |\xi|^{2}-B(z \cdot \xi)^{2}.$ To simplify the discussion, the matrix with the eigenvectors on the columns, $V(\xi, \textbf{v}),$ has been calculated in the  equilibrium point (\ref{equilibrium_point}) $\bar{\textbf{v}}=(\bar{B}, \bar{w}, \bar{z})=(\bar{B}, \textbf{0}, \textbf{0}),$ with $\bar{B}=1-\frac{k_{D}}{k_{B}}$. We have

$$V(\xi, \textbf{v})=\frac{1}{|\xi|}$$
\begin{equation}
\label{V_matrix_3d}
\left(\begin{array}{ccccccc}
0 & 0 & 0 & 0 & 0 & \bar{B}\sqrt{\frac{1- \bar{B}}{\gamma}}|\xi| & - \bar{B}\sqrt{\frac{1-\bar{B}}{\gamma}}|\xi| \\
-(1-\bar{B})\xi_{1} & \xi_{2}(1-\bar{B}) & \xi_{3}(1-\bar{B}) & -\bar{B}\xi_{2} & -\bar{B}\xi_{3} & 0 & 0 \\
-(1-\bar{B})\xi_{2} & -(1-\bar{B})\xi_{1} & 0 & \bar{B}\xi_{1} & 0 & 0 & 0 \\
-(1-\bar{B})\xi_{3} & 0 & -(1-\bar{B})\xi_{1} & 0 & \bar{B}\xi_{1} & 0 & 0 \\
\xi_{1} & -\xi_{2} & -\xi_{3} & -\xi_{2} & -\xi_{3} & \xi_{1} & \xi_{1} \\
\xi_{2} & \xi_{1} & 0 & \xi_{1} & 0 & \xi_{2} & \xi_{2} \\
\xi_{3} & 0 & \xi_{1} & 0 & \xi_{1} & \xi_{3} & \xi_{3} \\
\end{array}\right).
\end{equation}

For $\xi_{1}=0,$ we get
\begin{equation}
\label{V3d_x10}
\left(\begin{array}{ccccccc}
0 & 0 & 0 & 0 & 0 & \bar{B}\sqrt{\frac{1-\bar{B}}{\gamma}}|\xi| & -\bar{B}\sqrt{\frac{1-\bar{B}}{\gamma}}|\xi| \\
0 & (1-\bar{B})\xi_{2} & (1-\bar{B})\xi_{3} & -\bar{B}\xi_{2} & -\bar{B}\xi_{3} & 0 & 0 \\
-(1-\bar{B})\xi_{2} & 0 & 0 & 0 & 0 & 0 & 0 \\
-(1-\bar{B})\xi_{3} & 0 & 0 & 0 & 0 & 0 & 0 \\
0 & -\xi_{2} & -\xi_{3} & -\xi_{2} & -\xi_{3} & 0 & 0 \\
\xi_{2} & 0 & 0 & 0 & 0 & \xi_{2} & \xi_{2} \\
\xi_{3} & 0 & 0 & 0 & 0 & \xi_{3} & \xi_{3} \\
\end{array}\right).
\end{equation}

The second and the third columns of (\ref{V3d_x10}), namely the second and the third coincident eigenvalues in (\ref{eigenvalues_3d}), degenerate in the same vector when $\xi_{1}=0.$ This happens also to the fourth and the fifth columns of (\ref{V3d_x10}), i.e. the fourth and the fifth coincident eigenvalues in (\ref{eigenvalues_3d}). For this reason, in the three dimensional case the symbol $\textbf{P}\tilde{A}(\xi, \textbf{v})$ in (\ref{PA_symbol_3d}) loses the property of \emph{strong symmetrizability} and related \emph{microlocal symmetrizability}, according to the definitions given in \cite{Metivier}.


\begin{thebibliography}{11}

\bibitem{astanin}
\textsc{Astanin S. \& Preziosi L.} (2008)
\newblock Multiphase models of tumor growth., 
Selected topics in cancer modeling,
\newblock {\em Model. Simul. Sci. Eng. Technol., }
Birkh\"auser  Boston, Boston, MA, 
223--253.

\bibitem{Da Veiga}
\textsc{Beir\~{a}o da Veiga H. \& Valli A.} (1980)
\newblock Existence of $C^{\infty}$ solutions of the Euler Equations for non-homogeneous fluids,
\newblock  {\em Comm. Part. Diff. Eq.}
5,
95-107.

\bibitem{Benzoni}
\textsc{Benzoni-Gavage S. \& Serre D.} (2007)
\newblock Multidimensional Hyperbolic Partial Differential Equations,
\newblock{\em Oxford University Press.}

\bibitem{Bertozzi}
\textsc{Bertozzi A. \& Majda A.} (2002)
\newblock Vorticity and Incompressible Flow,
\newblock{\em Cambridge University Press.}

\bibitem{Bianchini1}
\textsc{Bianchini R. \& Natalini R.} (2016)
\newblock Global existence and asymptotic stability of smooth solutions to a fluid dynamics model of biofilms in one space dimension,
\newblock {\em J. Math. Anal. Appl.,}
{434},
1909-1923.

\bibitem{Bianchini}
\textsc{Bianchini R. \& Natalini R.} (2016)
\newblock Well-posedness of a model of nonhomogeneous compressible-incompressible fluids,
\newblock {\em submitted preprint.}

\bibitem{Bowen1}
\textsc{Bowen R. M.} (1976)
\newblock{Theory of mixtures},
\newblock {\em Continuum physics, Vol.3, Eringen A.C. ed., Academic Press}.

\bibitem{Bowen2}
\textsc{Bowen R. M.} (1980)
\newblock{Incompressible porous media model by use of theory of mixtures,}
\newblock {\em Int. J. Eng. Sci.,}
{18},
1129-1148.

\bibitem{cdnr}
\textsc{Clarelli F., Di Russo C., Natalini R. \& Ribot M.} (2013)
\newblock A fluid dynamics model of the growth of phototrophic biofilms,
\newblock  {\em J. Math. Biol.,}
{66}(7),
1387--1408.

\bibitem{Danchin}
\textsc{Danchin R.} (2010)
\newblock On the well-posedness of the incompressible density-dependent Euler equations in the $L^{p}$ framework,
\newblock  {\em J. Diff. Eq.}
{24}(8),
2130-2170.

\bibitem{Ehlers}
\textsc{Ehlers W.} (1993)
\newblock Constitutive equations for granular materials in geomechanical context,
\newblock {\em Environmental Sciences and Geophysics, CISM Courses and Lectures N.337, Hutter K. ed., Springer-Verlag}.

\bibitem{Farina}
\textsc{Farina A. \& Preziosi L.} (2002)
\newblock On Darcy's law for growing porous media,
\newblock  {\em In. J. Non-Lin. Mech.}
37(3),
485-491.

\bibitem{Grenier}
\textsc{Grenier E.} (1997)
\newblock Pseudo-Differential Energy Estimates of Singular Perturbations,
\newblock  {\em Comm. Pure Appl. Math.}
{50}(9),
821-865.

\bibitem{Gud1}
\textsc{Gudmundsson R.L.} (2002)
\newblock On the well-posedness of the two-fluid model for dispersed two-phase flow in 2D,
\newblock {\em TechnicalReport TRITA-NA-0223, RoyalInstitute of Technology}

\bibitem{Ishii}
\textsc{Ishii M. \& Song J.H.} (2000)
\newblock The well-posedness of incompressible one dimensional two-fluid model, 
\newblock {\em International Journal of Heat and Mass Transfer}
{43}, 
2221–2231.

\bibitem{Parabello}
\textsc{Hanich L., Louaked M., Thompson C. P.} (2003)
\newblock Well-posedness of incompressible models of two - and three - phase flow,
\newblock {\em IMA Journal of Applied Mathematics}
{68},
595-620.
 
\bibitem{Klainerman}
\textsc{Klainerman S. \& Majda A.} (1981)
\newblock Singular Limits of Quasilinear Hyperbolic Systems with Large Parameters and the Incompressible Limit of Compressible Fluids,
\newblock {\em Comm. Pure Appl. Math.}
{XXXIV},
481-524.

\bibitem{Lad}
\textsc{Ladyzhenskaya A. O. \& Solonnikov V. A.} (1978)
\newblock Unique solvability of an initial and boundary value problem for viscous incompressible non-homogeneous fluids,
\newblock {\em Clarendon Press, Oxford}.
9,
697–749.

\bibitem{Lions}
\textsc{Lions P. L.} (1996)
\newblock Mathematical Topics in Fluid Mechanics,
\newblock {\em J. Soviet. Math.}

\bibitem{Majda}
\textsc{Majda A.} (1984)
\newblock Compressible Fluid Flow and Systems of Conservation Laws in Several
Space Variables, 
\newblock {\em Springer-Verlag, New York}.

\bibitem{Marsden}
\textsc{Marsden J. E.} (1976)
\newblock Well-posedness of the equations of a non-homogeneous perfect fluid,
\newblock {\em Comm. Part. Diff. Eq.}
1,
215-230.

\bibitem{Metivier}
\textsc{M\'etivier G.} (2008)
\newblock Para-differential Calculus and Application to the Cauchy Problem for Nonlinear Systems,
\newblock {\em CRM Series, Edizioni della Scuola Normale Superiore}.

\bibitem{Muller1}
\textsc{Muller I.} (1975)
\newblock Thermodynamics of mixture of fluids,
\newblock {\em J. Mec.,}
{14},
267-303.

\bibitem{Muller2}
\textsc{Muller I.} (1985)
\newblock Rational Thermodynamics of mixtures of fluids,
\newblock {\em Thermodynamics and Constitutive Equations, Lecture Notes in Physics 228, Grioli G. ed., Springer-Verlag.}

\bibitem{Rajagopal} 
\textsc{Rajagopal K.R. \& Tao L.} (1995)
\newblock Mechanics of Mixtures,
\newblock Series on Advances in Mathematics for Applied Sciences, 35. World Scientific Publishing Co., River Edge, NJ.       

\bibitem{Schochet}
\textsc{Schochet S.} (1986)
\newblock The Compressible Euler Equations in a Bounded Domain: Existence of Solutions and the Incompressible Limit,
\newblock {\em Communications in Mathematical Physics}
{104},
49-75.

\bibitem{Stewart}
\textsc{Stewart H. B. \&  Wendroff. B.} (1984)
\newblock Review article two-phase flow: Models and methods. 
\newblock{Journal of Computational Physics} 
{56},
363–409. 

\bibitem{Taylor}
\textsc{Taylor M.} (1996)
\newblock Partial differential equations III,
\newblock {\em Applied Mathematical Sciences 117, Springer}.  

\bibitem{Temam}
\textsc{Temam R.} (1977)
\newblock Navier-Stokes Equations -Theory and Numerical Analysis,
\newblock {\em North-Holland Publishing Company}.  

\bibitem{Valli}
\textsc{Valli A. \& Zajazckowski W. M.} (1988)
\newblock About the motion of nonhomogeneous ideal incompressible fluids,
\newblock {\em Nonlinear Analysis, Theory, Methods} \& {\em Applications}
{Vol.12, No. 1}
{43-50}.

\bibitem{Ystrom}
\textsc{ Ystr\"om} (2001)
\newblock On two-fluid equations for dispersed incompressible two-phase flow,
\newblock {\em Comput. Visual. Sci.} 
{4},
125–135. 

\end{thebibliography}
\end{document}